\newtheorem{thm}{Theorem}[section]
\newtheorem{cor}[thm]{Corollary}
\newtheorem{prop}[thm]{Proposition}
\newtheorem{lem}[thm]{Lemma}
\newtheorem{quest}[thm]{Question}
\newtheorem{mainthm}{Theorem}
\theoremstyle{definition}
\newtheorem{defn}[thm]{Definition}
\newtheorem{exmp}[thm]{Example}
\theoremstyle{remark}
\newtheorem{rems}[thm]{Remarks}
\let\c@equation\c@thm
\numberwithin{equation}{section}
\newcommand{\intrr}{\mathcal{R}^\circ_{\epsilon}}
\newcommand{\intr}{\mathcal{R}^\circ_{\epsilon,\delta}}
\newcommand{\intrp}{\mathcal{R}^\circ_{\epsilon',\delta'}}
\newcommand{\ghto}{\overset{GH}\longrightarrow}
\newcommand{\Ric}{\mathrm{Ric}}
\newcommand{\Hmeas}{\mathcal{H}^n}
\newcommand{\dimH}{\dim_{\mathcal{H}}}
\newcommand{\susp}{\mathrm{Susp}}
\title[Ricci curvature and fundamental groups of effective regular sets]{Ricci curvature and fundamental groups \\ of effective regular sets}
\author{Jiayin Pan}
\address{Department of Mathematics, University of California, Santa Cruz, CA, USA.}
\email{jpan53@ucsc.edu}
\begin{document}
	
	\maketitle
	\begin{center}
		\textit{In honor of Xiaochun Rong on his 70th birthday}
	\end{center}

    \begin{abstract}
    	For a Gromov-Hausdorff convergent sequence of closed manifolds $M_i^n\ghto X$ with $$\Ric\ge-(n-1),\quad \mathrm{diam}(M_i)\le D,\quad \mathrm{vol}(M_i)\ge v>0,$$
    	we study the relation between $\pi_1(M_i)$ and $X$. It was known before that there is a surjective homomorphism $\phi_i:\pi_1(M_i)\to \pi_1(X)$ by the work of Pan-Wei \cite{PW}. In this paper, we construct a surjective homomorphism from the interior of the effective regular set in $X$ back to $M_i$, that is, $\psi_i:\pi_1(\intr)\to \pi_1(M_i)$. These surjective homomorphisms $\phi_i$ and $\psi_i$ are natural in the sense that their composition $\phi_i \circ \psi_i$ is exactly the homomorphism induced by the inclusion map $\intr \hookrightarrow X$.
    \end{abstract}

	\section{Introduction}\label{sec:intro}
	
	For a Gromov-Hausdorff convergent sequence $M_i\ghto X$ with curvature bounds, it is crucial to understand the relationship between $M_i$ and $X$. For example, when $M_i$ are closed $n$-manifolds with
	$$\sec\ge-1,\quad \mathrm{diam}(M_i)\le D,\quad \mathrm{vol}(M_i)\ge v>0,$$
	Perelman proved that $M_i$ is homeomorphic to $X$ for all $i$ large \cite{Per}. For the context of this paper, let us consider a convergent sequence of closed $n$-manifolds $M_i\ghto X$ with Ricci curvature lower bounds
	\begin{equation}\label{eq:cond}
	\Ric\ge-(n-1),\quad \mathrm{diam}(M_i)\le D,\quad \mathrm{vol}(M_i)\ge v>0
	\end{equation}
	Under this weaker condition, one cannot expect $X$ to be homeomorphic to $M_i$. By the work of Wei and the author \cite{PW}, the limit space $X$ is semi-locally simply connected. This was later generalized to the collapsing case by Wang \cite{Wang}. As a consequence, there is a forward surjective homomorphism from $\pi_1(M_i)$ to $\pi_1(X)$.
	
	\begin{thm}\cite{PW}\label{thm:forward_surj}
	Let $M_i$ be a sequence of closed $n$-manifolds with (\ref{eq:cond})
	and Gromov-Hausdorff converging to a limit space $X$. Let $x_i\in M_i$ be a sequence of points converging to $x\in X$. Then for all $i$ large, there is a surjective homomorphism $$\phi_i:\pi_1(M_i,x_i)\to \pi_1(X,x).$$
	\end{thm}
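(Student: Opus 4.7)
The plan is to use the semi-local simple connectedness of $X$ established in \cite{PW}, together with a discretization procedure along Gromov--Hausdorff approximations, to define $\phi_i$ and prove it is surjective.

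First, I would fix the SLSC scale: there exists $\epsilon_0>0$ such that every loop in $X$ of diameter at most $2\epsilon_0$ is contractible in $X$. For $i$ large enough that a Gromov--Hausdorff approximation $f_i:M_i\to X$ (with $f_i(x_i)$ close to $x$) has error $\delta_i\ll \epsilon_0$, I will adjust at the basepoint by a short geodesic so that I may treat $f_i(x_i)=x$.

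Next I would define $\phi_i$. Given a loop $\gamma:[0,1]\to M_i$ based at $x_i$, take a partition $0=t_0<\dots<t_N=1$ so fine that each arc $\gamma([t_k,t_{k+1}])$ has diameter less than $\epsilon_0/10$; set $p_k=f_i(\gamma(t_k))$ and join consecutive $p_k,p_{k+1}$ by minimizing geodesics in $X$ to form a piecewise geodesic loop $\hat\gamma$ in $(X,x)$; then put $\phi_i([\gamma])=[\hat\gamma]$. Independence of the partition follows by passing to a common refinement: the resulting triangles all have perimeter less than $2\epsilon_0$ and hence bound disks in $X$ by SLSC. Well-definedness on $\pi_1(M_i)$ reduces to the following: given a homotopy $H:[0,1]^2\to M_i$ between loops $\gamma_0,\gamma_1$, subdivide the square into a grid so fine that $f_i\circ H$ sends each closed cell into a set of diameter $<\epsilon_0/10$; the vertices give a combinatorial diagram in $X$ whose $1$-skeleton is filled by minimizing geodesics, each $2$-cell bounds a disk in $X$ by SLSC, and assembling these disks yields a homotopy between $\hat\gamma_0$ and $\hat\gamma_1$. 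The homomorphism property follows immediately from compatibility with concatenation.

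For surjectivity, given a loop $\sigma$ based at $x$ in $X$, I will approximate $\sigma$ by a piecewise geodesic polygonal loop with vertices $p_0=x,p_1,\dots,p_N=x$ and edges of length $<\epsilon_0/10$ (using that $X$ is a compact length space and SLSC). Choose preimages $q_k\in M_i$ with $f_i(q_k)$ close to $p_k$ and $q_0=q_N=x_i$; join consecutive $q_k$ by minimizing geodesics in $M_i$ to form a loop $\gamma$. The same triangle-filling argument gives $\phi_i([\gamma])=[\sigma]$.

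The main obstacle is the well-definedness step. Because Gromov--Hausdorff approximations are not continuous, the image of a $2$-cell under $f_i\circ H$ is only a scattered finite set of points in $X$, and one must produce disks filling each resulting small polygon and glue them coherently along shared edges. This is exactly where the uniform SLSC scale from \cite{PW} is essential: it provides a single scale below which both the local filling and the coherent gluing across adjacent cells can be performed uniformly for the entire homotopy.
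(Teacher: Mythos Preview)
Your proposal is correct and matches the approach the paper attributes to \cite{Tu,PW}: the paper does not give its own proof of Theorem~\ref{thm:forward_surj} but only cites it and, in the proof of Lemma~\ref{lem:comp_incl}, recalls exactly the construction you describe---choose an SLSC scale $\delta_0$ on $X$, and for $i$ large send a loop $\sigma_i$ in $M_i$ to a loop $\sigma$ in $X$ that is $5\delta_1$-close to $\sigma_i$ (with $\delta_1\le\delta_0/20$), using the SLSC property to verify well-definedness and surjectivity. Your discretize-and-fill-triangles description is precisely how one implements ``drawing a nearby loop,'' so there is no substantive difference.
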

	
	For an element $[\sigma_i]\in \pi_1(M_i,x_i)$ represented by a loop $\sigma_i$ based at $x_i$, its image under this forward homomorphism $\phi_i$ is constructed by drawing a loop $\sigma$ in $X$ that is sufficiently close to $\sigma_i$; see \cite{Tu,PW}. While $\phi_i$ is surjective, in general it is not injective even under the noncollapsing condition. In fact, there could be shorter and shorter non-contractible loops at $x_i$ with length tending to $0$, then by construction $\phi_i$ sends them to identity. We will review an example by Otsu \cite{Ot} in Section \ref{sec:exmp} regarding this.
	
	From Theorem \ref{thm:forward_surj}, because $\phi_i$ may have a kernel, it appears that some elements in $\pi_1(M_i)$ are lost in the limit $X$. As the main result of this paper, we show that all elements in $\pi_1(M_i)$ are still retained in $X$; more specifically, in the effective regular set $\mathcal{R}_{\epsilon,\delta}$ of $X$. In fact, we will construct a backward surjective homomorphism from $\pi_1(\intr,x)$ to $\pi_1(M_i,x_i)$, where $\intr$ is the interior of $\mathcal{R}_{\epsilon,\delta}$ and $x\in X$ is a regular point. By the regularity theory developed by Cheeger-Colding \cite{CC1}, $\intr$ is a connected topological manifold of dimension $n$ for all $0<\epsilon\le \epsilon(n)$ and $\delta>0$.
	
\begin{mainthm}\label{thm:backward_surj}
	Let 
	$$(M_i,x_i)\ghto (X,x)$$
	be a convergent sequence of closed $n$-manifolds with (\ref{eq:cond}), where $x$ is a regular point. Then\\
	(1) for any $0<\epsilon<\epsilon(n)$ and sufficiently small $0<\delta< \delta(\epsilon,x)$, there is a surjective homomorphism
	$$\psi^\delta_i:\pi_1(\intr,x)\to \pi_1(M_i,x_i)$$ for all $i$ large;\\
	(2) the composition of $\psi^\delta_i$ and $\phi_i$ in Theorem \ref{thm:forward_surj}
	$$\phi_i \circ \psi^\delta_i: \pi_1(\intr,x)\to \pi_1(X,x)$$
	is exactly the homomorphism $\iota_\star$ induced by the inclusion map $\iota: \intr \hookrightarrow X$.
\end{mainthm}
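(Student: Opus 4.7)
The plan is to define $\psi_i^\delta$ by a \emph{lift-and-fill} procedure. Fix a Gromov--Hausdorff approximation $\Phi_i:X\to M_i$ with $\Phi_i(x)=x_i$. Given a loop $\sigma:[0,1]\to\intr$ based at $x$, one chooses a partition $0=t_0<\cdots<t_N=1$ of mesh much smaller than $\delta$, forms the broken-geodesic loop $\tilde\sigma_i$ in $M_i$ joining $\Phi_i(\sigma(t_0)),\ldots,\Phi_i(\sigma(t_N))$ by minimal geodesics, and sets $\psi_i^\delta([\sigma])=[\tilde\sigma_i]$. The key input for all three verifications will be Cheeger--Colding stability at effectively regular points: for $q\in\intr$ there is a uniform scale $r_q>0$ such that, for $i$ large, both $B_{r_q}(q)\subset X$ and $B_{r_q}(\Phi_i(q))\subset M_i$ are bi-H\"older homeomorphic to Euclidean balls, hence contractible.

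\textbf{Well-definedness and the composition identity.} Refinements of the partition, and small continuous deformations of $\sigma$ inside $\intr$, differ by polygonal segments whose consecutive vertices are $r_q$-close for some $q\in\intr$; such segments lie in a contractible ball of $M_i$ and can be homotoped away. A homotopy $H:[0,1]^2\to\intr$ is discretized on a fine grid and filled in $M_i$ cell by cell using local contractibility, producing a homotopy of the corresponding broken-geodesic loops. Concatenation is preserved, so $\psi_i^\delta$ is a homomorphism. Part (2) is then built into the construction: $\tilde\sigma_i$ is GH-close to $\sigma$ by design, and in the Pan--Wei construction \cite{PW} of $\phi_i$ the image $\phi_i([\tilde\sigma_i])$ is represented by any sufficiently close loop in $X$, for which one may take $\sigma$ itself, yielding $\phi_i\circ\psi_i^\delta([\sigma])=[\sigma]=\iota_\star([\sigma])$.

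\textbf{Surjectivity.} Given $[\tau]\in\pi_1(M_i,x_i)$, represent it by a loop and subdivide so finely that consecutive vertices lie in a common contractible ball of $M_i$. Perturb each vertex $p_j$ to a nearby point $p_j'$ in the effective regular set of $M_i$ at parameters $(\epsilon',\delta)$ with $\epsilon'<\epsilon$; such points are dense by the Cheeger--Colding bounds on the singular set together with noncollapsing. The broken geodesic $\tau'$ through $p_0',\ldots,p_N'$ is homotopic to $\tau$, the perturbations occurring inside contractible balls. By stability under Gromov--Hausdorff convergence the pullbacks $\Phi_i^{-1}(p_j')$ lie in $\intr$ once $\epsilon'$ is strictly less than $\epsilon$; joining them by short paths in $\intr$ yields a loop $\sigma\in\pi_1(\intr,x)$. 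Reapplying the lift-and-fill definition to $\sigma$ reproduces a broken geodesic in $M_i$ differing from $\tau'$ only by short segments inside contractible balls, so $\psi_i^\delta([\sigma])=[\tau']=[\tau]$.

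\textbf{Main obstacle.} I expect the main hurdle to be the perturbation step in surjectivity: one must control, uniformly in $i$, the local contractibility scale in $M_i$ along $\tau$, the density scale of the effective regular set of $M_i$, and the stability scale under which $\Phi_i$ carries effectively regular points of $X$ to effectively regular points of $M_i$ (and back). All three are quantitative consequences of Cheeger--Colding, but they must be arranged in the correct order relative to $\delta$; this is exactly what dictates the threshold $\delta<\delta(\epsilon,x)$ appearing in the statement. With the scales chosen compatibly the perturbation becomes a codimension-two avoidance argument: the effective singular set of $M_i$ has Hausdorff codimension at least two, so a generic loop may be homotoped off it without changing its homotopy class.
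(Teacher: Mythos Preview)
Your construction of $\psi_i^\delta$ and the verification of part (2) are correct and essentially coincide with the paper's approach (Section~4 and Lemma~5.6): the paper also defines $\psi_i^\delta$ by drawing nearby broken-geodesic loops and uses Cheeger--Colding local contractibility near effectively regular points (Theorem~\ref{pre:CC_reg_contract}) to prove well-definedness and the composition identity.

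The surjectivity argument, however, has a genuine gap. Your key claim is that one can perturb each vertex $p_j$ of $\tau$ to a nearby point in $\mathcal{R}_{\epsilon',\delta}(M_i)$ because ``such points are dense'' and because ``the effective singular set of $M_i$ has Hausdorff codimension at least two.'' Neither assertion is correct at a \emph{fixed} scale $\delta$. The complement $M_i\setminus\mathcal{R}_{\epsilon',\delta}(M_i)$ is an \emph{open} subset of the smooth manifold $M_i$, hence has full Hausdorff dimension $n$; Cheeger--Naber gives small $n$-volume, not small dimension. Concretely, in Example~\ref{exmp:otsu} a whole tubular neighborhood of each suspension vertex in $N_i$ lies outside $\mathcal{R}_{\epsilon',\delta}(N_i)$ once $i$ is large, so vertices of $\tau$ landing there cannot be perturbed into $\mathcal{R}_{\epsilon',\delta}(N_i)$ by a small move. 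The codimension-two statement you have in mind applies to the singular set $\mathcal{S}$ of the \emph{limit} space, not to the effectively singular set of $M_i$ at a fixed scale; and even granting a codimension-two set, ``generic loops avoid it'' is a transversality statement that yields connectivity of the complement, not $\pi_1$-surjectivity of the inclusion.

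The paper circumvents this by a genuinely different route (Section~5). It argues by contradiction and passes to the equivariant Gromov--Hausdorff limit of the universal covers, $(\widetilde{M}_{i(j)},\Gamma_{i(j)})\to(Y,\Gamma)$. The problem of representing $g_{i(j)}\in\pi_1(M_{i(j)})$ by a loop in the effective regular set becomes, upstairs, the problem of joining $y$ to $gy$ by a \emph{path} in $Y$ avoiding $\mathcal{S}(Y)$; this is a connectivity question and follows from Theorem~\ref{pre:CC_cnt} because $\dimH\mathcal{S}(Y)\le n-2$. A second crucial ingredient is that $\mathrm{Fix}(\gamma)\subset\mathcal{S}(Y)$ for every nontrivial $\gamma\in\Gamma$ (Proposition~\ref{thm:fix_codim_2}, via Chen--Rong--Xu), so $\pi:Y\to X$ is a local isometry along the chosen path $\sigma$; this is what transfers effective regularity of $\sigma$ in $Y$ to effective regularity of $\pi(\sigma)$ in $X$ at a \emph{uniform} scale (obtained from compactness of $\sigma$ via Lemma~\ref{lem:reg_param}(2)). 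Your direct approach lacks both ingredients: there is no mechanism to convert the $\pi_1$-question into a path-connectivity question, and no control ensuring the scale $\delta$ is uniform along the constructed loop.
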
	
	
The construction of this backward homomorphism $\psi_i$ is natural and similar to that of $\phi_i$: namely, by drawing nearby loops. The surjectivity of $\psi_i$ requires a complete different and more involved argument than that of $\phi_i$. We remark that $\psi_i$ is not injective in general. In fact, we will review an example by Anderson \cite{An} in Section \ref{sec:exmp}; in this example, both $M_i$ and $X$ are simply connected but $\pi_1(\intr)$ is isomorphic to $\mathbb{Z}_2$.	
	

As an application of Theorem \ref{thm:forward_surj}, we show that if the inclusion map $\intrr \hookrightarrow X$ induces an injective homomorphism 
$\iota_\star: \pi_1(\intrr,x)\to \pi_1(X,x)$, then $\pi_1(M_i)$ is isomorphic to $\pi_1(X)$ for all $i$ large. Note that we are considering the $\epsilon$-regular set in this statement; in other words, the involvement of $\delta$ is dropped. 

\begin{mainthm}\label{thm:reg_inj_isom}
	Let 
	$$(M_i,x_i)\ghto (X,x)$$
	be a convergent sequence of closed $n$-manifolds with (\ref{eq:cond}), where $x$ is a regular point. Suppose that for some $0<\epsilon<\epsilon(n)$, the induced homomorphism 
	$$\iota_\star: \pi_1(\intrr,x)\to \pi_1(X,x)$$
	is injective, then $\pi_1(M_i)$ is isomorphic to $\pi_1(X)$ for all $i$ large. In particular, if $\intrr$ is simply connected, then so is $M_i$.
\end{mainthm}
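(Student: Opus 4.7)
\emph{Setup and reduction.} Theorem \ref{thm:forward_surj} already provides surjectivity of $\phi_i$; the task is to show injectivity. I would use Theorem \ref{thm:backward_surj}: for $0<\delta<\delta(\epsilon,x)$ sufficiently small and $i$ large, the composition $\phi_i\circ\psi_i^\delta$ equals the inclusion-induced map $\iota^\delta_\star:\pi_1(\intr,x)\to\pi_1(X,x)$, with $\psi_i^\delta$ surjective. Writing $j:\intr\hookrightarrow\intrr$ and $\iota:\intrr\hookrightarrow X$, the factorization $\iota^\delta_\star=\iota_\star\circ j_\star$ together with the surjectivity of $\psi_i^\delta$ yields
\[
\ker\phi_i=\psi_i^\delta(\ker\iota^\delta_\star);
\]
by the hypothesized injectivity of $\iota_\star$ this simplifies to $\ker\phi_i=\psi_i^\delta(\ker j_\star)$. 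Injectivity of $\phi_i$ therefore reduces to proving that $\psi_i^\delta$ annihilates $\ker j_\star$.

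\emph{Killing $\ker j_\star$.} Take $\beta\in\ker j_\star$, represented by a loop $\gamma\subset\intr$ with null-homotopy $H:D^2\to\intrr$. By the Cheeger-Colding regularity theory, the open set $\intrr$ is the increasing union $\bigcup_{\delta_0>0}\intrz$ as $\delta_0\to 0$, so compactness of $H(D^2)$ yields some $\delta_0\le\delta$ (which may additionally be taken below $\delta(\epsilon,x)$) with $H(D^2)\subset\intrz$. For $i$ large enough, Theorem \ref{thm:backward_surj} is then also available at level $\delta_0$, producing a surjection $\psi_i^{\delta_0}:\pi_1(\intrz,x)\to\pi_1(M_i,x_i)$. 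Writing $k:\intr\hookrightarrow\intrz$, the loop $\gamma$ now bounds $H$ inside $\intrz$, so $k_\star\beta=0$. The crux is the naturality identity
\[
\psi_i^\delta=\psi_i^{\delta_0}\circ k_\star,
\]
which should hold because both sides assign to a loop its nearby loop in $M_i$, a procedure depending only on the loop and on the Gromov-Hausdorff convergence, not on the particular regularity threshold. Granting it, $\psi_i^\delta(\beta)=\psi_i^{\delta_0}(k_\star\beta)=0$; combined with the reduction above this gives $\ker\phi_i=0$ and completes the isomorphism.

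\emph{Main obstacle.} The decisive ingredient is the naturality identity $\psi_i^\delta=\psi_i^{\delta_0}\circ k_\star$, which must be extracted from the explicit nearby-loop construction underlying Theorem \ref{thm:backward_surj}. Geometrically the identity is transparent---the same nearby loop works at any level of regularity---but a rigorous proof requires careful compatibility of the choices (basepoints, approximating nets, fillings) made at the two scales $\delta_0\le\delta$. A secondary quantifier point, that the auxiliary scale $\delta_0$ depends on the chosen null-homotopy and hence on $\beta$ (and potentially on $i$), is handled element by element: for each fixed $i$ in the eventual range and each $\alpha\in\ker\phi_i$, the argument selects a specific $\delta_0$ and invokes Theorem \ref{thm:backward_surj} at that finer level once $i$ is also large enough relative to $\delta_0$.
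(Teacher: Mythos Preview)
Your approach coincides with the paper's: factor $\iota^\delta_\star=\iota_\star\circ j_\star$, use the hypothesis to identify $\ker\iota^\delta_\star=\ker j_\star$, and then show that $\psi_i^\delta$ annihilates $\ker j_\star$ by pushing a null-homotopy into a finer effective regular set and invoking naturality of the backward maps under inclusion. The naturality identity you flag as the ``main obstacle'' is exactly Lemma~\ref{lem:reg_inclusion}, already established directly from the nearby-loop construction; nothing further is required there. (The paper phrases the conclusion as $\ker\psi_i^\delta=\ker\iota^\delta_\star$ rather than $\ker\phi_i=0$, but the two formulations are equivalent given surjectivity of $\psi_i^\delta$ and $\phi_i$.)

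One technical point needs adjustment. Your assertion that $\intrr=\bigcup_{\delta_0>0}\intrz$, and hence that the compact image $H(D^2)$ lands in some $\intrz$ with the \emph{same} $\epsilon$, is not supplied by the regularity theory as stated: Lemma~\ref{lem:reg_param}(1) only yields $\mathcal{R}_{\epsilon,\delta}\subseteq\intrp$ with a strictly larger $\epsilon'=\Psi(\epsilon|n)$, and there is no a priori positive lower bound for $\sup\{\delta:z\in\mathcal{R}_{\epsilon,\delta}\}$ on a compact subset of $\mathcal{R}_\epsilon$. The paper's remedy is to allow the enlargement of $\epsilon$: since $\mathcal{R}_\epsilon\subseteq\bigcup_{\delta'}\intrp$ is then a genuine open cover, compactness of $H(D^2)$ gives $H(D^2)\subseteq\intrp$ for some $\delta'<\delta$, and Lemma~\ref{lem:reg_inclusion} (with both parameters changing) finishes exactly as in your outline. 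The quantifier concern you raise at the end---that the auxiliary scale depends on the chosen $\beta$, so one must afterwards take $i$ large relative to it---appears in the paper's argument in the same form and is not further addressed there.
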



The work in this paper is motivated by the $\pi_1$-stability problem: 

\begin{quest}\label{quest:pi1_stab}
	Given $n,D,v>0$, is there a positive constant $\epsilon(n,D,v)>0$ such that if two closed $n$-manifolds $M_1$ and $M_2$ satisfy (\ref{eq:cond}) and $d_{GH}(M_1,M_2)\le \epsilon$, then $\pi_1(M_1)$ and $\pi_1(M_2)$ are isomorphic?
\end{quest}

As a comparison, if one replace Ricci curvature in (\ref{eq:cond}) by a sectional curvature lower bound $\mathrm{sec}\ge -1$, then $M_1$ and $M_2$ are homeomorphic when they are Gromov-Hausdorff close; see the works by Grove-Petersen-Wu \cite{GPW} and Perelman \cite{Per}.

Question \ref{quest:pi1_stab} is a stronger version of the celebrated finiteness result by Anderson \cite{An} below. In fact, if Question \ref{quest:pi1_stab} has an affirmative answer, then finiteness would easily follow by a standard contradicting argument. 

\begin{thm}\cite{An}\label{thm:pi1_finiteness}
	Given $n,D,v>0$, there are finitely many isomorphism classes of fundamental groups among closed $n$-manifolds with (\ref{eq:cond}).
\end{thm}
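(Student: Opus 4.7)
My approach is proof by contradiction, combining Gromov's precompactness theorem with the surjective homomorphisms from Theorems~\ref{thm:forward_surj} and~\ref{thm:backward_surj}. Suppose toward contradiction that infinitely many pairwise non-isomorphic fundamental groups arise among closed $n$-manifolds satisfying (\ref{eq:cond}). Choose representatives $M_i$; by Gromov's precompactness theorem (available thanks to the uniform Ricci and diameter bounds) pass to a subsequence with $M_i \ghto X$, and pick a regular point $x\in X$ with approximations $x_i\in M_i$.

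A first constraint comes from Gromov's short-generator construction applied to the universal cover $\tilde{M}_i$: greedily choosing deck transformations of minimal displacement yields a generating set of $\pi_1(M_i)$ consisting of loops of length at most $2D$, and a Bishop--Gromov packing count on $\tilde{M}_i$ (using that the Ricci bound lifts and that $\mathrm{vol}\,B_{2D}(\tilde{x}_i)\ge v$) bounds the cardinality of this generating set by some $N=N(n,D,v)$. The remaining flexibility is in the relations. To cut this down, fix $\epsilon\in(0,\epsilon(n))$ and a sufficiently small $\delta>0$, and set $\Lambda = \pi_1(\intr, x)$. Theorem~\ref{thm:backward_surj} yields surjections $\psi^\delta_i : \Lambda \to \pi_1(M_i, x_i)$ for all $i$ large, so each $\pi_1(M_i)$ equals $\Lambda/K_i$ for some normal subgroup $K_i\trianglelefteq\Lambda$; by part~(2), every $K_i$ is contained in the fixed subgroup $\ker(\iota_\star)$.

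The main obstacle is to conclude that only finitely many distinct quotients $\Lambda/K_i$ arise, producing the contradiction. I would extract a further subsequence along which the universal covers $(\tilde{M}_i,\tilde{x}_i,\pi_1(M_i))$ converge equivariantly (Fukaya's equivariant Gromov--Hausdorff compactness) to some $(\tilde{X},\tilde{x},G)$ with $\tilde{X}/G=X$; the short generators then converge to a generating set of $G$, and any relation of bounded word length satisfied in $\pi_1(M_i)$ passes to a relation in $G$. Combined with the uniform bound on the number of generators and the universal quotient structure $\pi_1(M_i)=\Lambda/K_i$ with $K_i\subseteq\ker(\iota_\star)$, this should force the isomorphism class of $\pi_1(M_i)$ to stabilize along the subsequence, the desired contradiction. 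Under a sectional curvature lower bound the analogous step is immediate via Toponogov's realization of relations by controlled geodesic triangles; under only a Ricci lower bound one must instead rely on the stability of the equivariant limit and on the rigidity embedded in Theorem~\ref{thm:backward_surj} to push the argument through.
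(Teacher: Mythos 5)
This theorem is quoted from Anderson \cite{An}; the paper itself gives no proof (it only remarks that finiteness \emph{would} follow from a positive answer to Question~\ref{quest:pi1_stab}, which is open), so your proposal must stand on its own. Its opening moves are sound and standard: contradiction, Gromov precompactness, and the Bishop--Gromov counting bound $\#\{\gamma\in\pi_1(M_i):d(\gamma\tilde x_i,\tilde x_i)\le 2D\}\le v(n,-1,3D)/v=:N$, which bounds the number of short generators. But the decisive step --- that only finitely many groups can occur along the convergent subsequence --- is exactly where you write ``this should force the isomorphism class to stabilize,'' and none of the ingredients you assemble actually forces it. The containment $K_i\subseteq\ker\iota_\star$ only says that $\pi_1(M_i)$ sits as a quotient between the fixed groups $\Lambda=\pi_1(\intr,x)$ and $\pi_1(X,x)$; there can be infinitely many pairwise non-isomorphic such intermediate quotients (already $\Lambda=\mathbb{Z}$, $\ker\iota_\star=\mathbb{Z}$, $K_i=m_i\mathbb{Z}$ gives $\mathbb{Z}/m_i$ for all $m_i$), and $\Lambda$ is not even known to be finitely generated. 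Likewise, ``relations of bounded length pass to relations in $G$'' goes the wrong way: to pin down $\pi_1(M_i)$ you need every relation of $\pi_1(M_i)$ to be a consequence of boundedly many relations of bounded length, not merely that some relations survive in the limit.

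The missing idea is to bound the \emph{relations}, not just the generators. Since $\Gamma_i=\pi_1(M_i,x_i)$ acts on the simply connected space $\widetilde M_i$ with the translates of the open set $U_i=B_{D+1}(\tilde x_i)$ covering, the standard presentation lemma (Gromov; see also Bridson--Haefliger I.8.10, or Anderson's Dirichlet-domain version) gives $\Gamma_i=\langle S_i\mid R_i\rangle$ with $S_i=\{\gamma: \gamma U_i\cap U_i\neq\emptyset\}$ and every relator of the triangular form $\gamma\gamma'=\gamma''$ with $\gamma,\gamma',\gamma''\in S_i$. The volume count gives $|S_i|\le N(n,D,v)$, so each $\Gamma_i$ is determined by a partial multiplication table on at most $N$ symbols; there are at most $2^{N^3}$ such tables, hence finitely many isomorphism classes --- no limit space, no Theorem~\ref{thm:backward_surj}, and no equivariant convergence are needed. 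If you do want to run your limit argument instead, the ingredient you must add is a uniform displacement gap of the type in Lemma~\ref{lem:length_bound} (or Theorem~\ref{pre:nss}): it guarantees that distinct short elements of $\Gamma_i$ converge to distinct elements of the discrete limit group $G$ and that $\gamma_i\gamma_i'(\gamma_i'')^{-1}\to e$ forces $\gamma_i\gamma_i'=\gamma_i''$ for large $i$, so the finite multiplication tables, and hence the presentations, stabilize. Without one of these two mechanisms the proof does not close.
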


To resolve Question \ref{quest:pi1_stab}, it is equivalent to answer:

\begin{quest}\label{quest:determine_pi1}
	For a convergent sequence of closed $n$-manifolds $M_i\ghto X$ with (\ref{eq:cond}), is it possible to determine $\pi_1(M_i)$ solely from $X$?
\end{quest}

Theorems \ref{thm:backward_surj} and \ref{thm:reg_inj_isom} provide partial answers to Question \ref{quest:determine_pi1}. 

\begin{rems} Let us mention other related results regarding Questions \ref{quest:pi1_stab} and \ref{quest:determine_pi1}.\\
(1) When $X$ satisfies a local half-volume lower bound, we have a positive answer; see \cite[Section 3]{PW}.\\
(2) If one considers the equivariant Gromov-Hausdorff convergence of the Riemannian universal covers, then it holds that $\pi_1(M_i,p_i)$ is isometric to the limit group for all $i$ large (see \cite[Section 2.3]{PR} for details). However, because a subsequence was chosen to derive equivariant convergence, this result does not provide direct answers to Question \ref{quest:pi1_stab}.
\end{rems}

The proofs in this paper relies on several ingredients. The first one is the regularity theory of non-collapsing Ricci limit spaces developed by Cheeger-Colding \cite{Co,CC1,CC2,Ch_book}. The second ingredient is the equivariant convergence under Ricci and volume lower bounds; in particular, we utilize some of the results by Pan-Rong \cite{PR} and Chen-Rong-Xu \cite{CRX}. Lastly, we use some of the methods in Pan-Wei's work \cite{PW} on loops and homotopies under Gromov-Hausdorff convergence; these techniques can be traced back to the work of Borsuk \cite{Bor} and Tuschman \cite{Tu}.

\tableofcontents

\textit{Acknowledgment.} The author is partially supported by National Science Foundation DMS-2304698 and Simons Foundation Travel Support for Mathematicians. The author would like to thank Xiaochun Rong and Guofang Wei for many fruitful discussions through the years. The author would like to thank Dimitri Navarro for suggestions on an early draft of this paper.

\section{Preliminaries}\label{sec:pre}

\subsection{Regularity theory of noncollapsing Ricci limit spaces}

Throughout the paper, we always use $\Psi(\epsilon|n)$ to represent some nonnegative function depending on $\epsilon$ and $n$ with
$$\lim\limits_{\epsilon\to 0} \Psi(\epsilon|n)=0.$$ 
We may use the same symbol $\Psi(\epsilon|n)$ though dependence on $\epsilon$ or $n$ may be different. 

Given $n\in\mathbb{N}$, $\kappa\ge 0$, and $v>0$, we denote $\mathcal{M}(n,-\kappa,v)$ the set of all pointed Ricci limit spaces $(X,x)$ coming from some GH convergent sequence of complete $n$-manifolds $(M_i,p_i)$ with 
\begin{equation}\label{eq:local_cond}
\Ric\ge -(n-1)\kappa,\quad \mathrm{vol}(B_1(p_i))\ge v>0.
\end{equation}
The regularity theory about these noncollapsing Ricci limit spaces are mainly developed by Cheeger, Colding, and Naber. Below, we review some of the results that will be used later. The main references are \cite{CC2,Ch_book}.

\begin{defn}\cite{CC2,Ch_book}
Let $\epsilon,\delta>0$. For a Ricci limit space $X\in \mathcal{M}(n,-1,v)$, we define $(\epsilon,\delta)$-regular set, $\epsilon$-regular set, regular set, and singular set of $X$ as below.
\begin{align*}
\mathcal{R}_{\epsilon,\delta}&=\{x\in X\ |\  d_{GH}(B_r(x),B_r^n(0))\le \epsilon r \text{ for all } 0<r\le \delta\},\\
\mathcal{R}_\epsilon &= \bigcup_{\delta>0} \mathcal{R}_{\epsilon,\delta}, \\
\mathcal{R}&=\bigcap_{\epsilon>0} \mathcal{R}_\epsilon=\bigcap_{\epsilon>0}\bigcup_{\delta>0} \mathcal{R}_{\epsilon,\delta}.\\
\mathcal{S}&=X-\mathcal{R}.
\end{align*}
\end{defn}

\begin{thm}\cite{Co,CC2}\label{pre:CC_vol_conv}
	Let $(M^n_i,p_i)\ghto (X,x)$ be a convergent sequence with (\ref{eq:local_cond}). Then for all $r>0$, we have volume convergence
	$$\mathrm{vol}(B_r(p_i))\to \Hmeas(B_r(x))$$
	as $i\to\infty$, where $\Hmeas$ is the $n$-dimensional Hausdorff measure on $X$.
\end{thm}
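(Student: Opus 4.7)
The plan is to promote Gromov--Hausdorff convergence to measured Gromov--Hausdorff convergence and then identify the limit measure with $\Hmeas$. First, by Bishop--Gromov relative volume comparison, $\mathrm{vol}(B_r(p_i))$ is uniformly bounded above, and by the noncollapsing hypothesis it is uniformly bounded below. Viewing $(M_i^n, p_i, \mathrm{vol}_{M_i})$ as pointed metric measure spaces, I would apply Fukaya-style precompactness in the measured GH topology to extract a subsequential weak limit $\nu$ on $X$, so that $\mathrm{vol}_{M_i} \rightharpoonup \nu$. The remaining task is to show $\nu = \Hmeas$, which will in particular pin down the limit and remove the need to pass to a subsequence.

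The heart of the argument is the identification $\nu = \Hmeas$. My plan is to verify the infinitesimal density formula $\lim_{r\to 0} \nu(B_r(y))/(\omega_n r^n)=1$ at $\nu$-a.e. $y\in X$; by the standard density characterization of Hausdorff measure in a doubling metric space, this forces $\nu = \Hmeas$. For this I would invoke the Cheeger--Colding structure theory for noncollapsed Ricci limits: $\nu$-a.e. point $y\in X$ is regular in the sense that its unique tangent cone is $\mathbb{R}^n$, i.e. $r^{-1}B_r(y) \ghto B_1(0^n)$ as $r\to 0$. The task then reduces to showing that this infinitesimal Euclidean approximation forces the correct density, which is exactly Colding's volume gap estimate applied to suitable rescalings along the original sequence.

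The main obstacle, and the technical core of the theorem, is Colding's volume gap estimate itself: if $B_r(q)$ in a manifold with $\Ric\ge -(n-1)$ is $\epsilon r$-close to a Euclidean ball of the same radius, then $|\mathrm{vol}(B_r(q)) - \omega_n r^n|\le \Psi(\epsilon|n)\cdot r^n$. The proof approximates the distance function $d_q$ by a harmonic function $h$, uses the Laplacian comparison $\Delta d_q\le (n-1)\coth(d_q)$ together with a Cheng--Yau gradient estimate to show $|\nabla h|\approx 1$ and $|\mathrm{Hess}\, h|$ is small in $L^2$, and then integrates by parts to compare $\mathrm{vol}(B_r(q))$ with its Euclidean model. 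This is where the Ricci lower bound genuinely enters and is the step I expect to be hardest; everything else is soft metric-measure theory.

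Finally, to pass from weak convergence to pointwise convergence $\mathrm{vol}(B_r(p_i))\to \Hmeas(B_r(x))$ for every $r>0$, observe that $\nu=\Hmeas$ is Radon and $\Hmeas(\partial B_r(x))=0$ for all but countably many $r$, so weak convergence gives equality on a dense set of radii. The Bishop--Gromov monotonicity of $\mathrm{vol}(B_r(p_i))/V_{-1,n}(r)$ is uniform in $i$, and this monotonicity, together with continuity of the comparison volume $V_{-1,n}$, promotes the convergence to every $r>0$ and shows it is independent of subsequence, completing the plan.
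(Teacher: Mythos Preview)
The paper does not give its own proof of this theorem. It is stated in the Preliminaries section as a result quoted from \cite{Co,CC2}, with no argument supplied; the paper only \emph{uses} volume convergence (for instance in Lemma~\ref{lem:reg_param} and Lemma~\ref{lem:length_bound}). So there is no ``paper's own proof'' to compare your proposal against.

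That said, your outline is a faithful high-level sketch of the Colding/Cheeger--Colding proof: the hard analytic content is exactly Colding's almost-maximal-volume estimate (GH-closeness to a Euclidean ball forces volume closeness), and the rest is the soft measured-GH/density argument you describe. One small comment: in the last paragraph you can avoid the detour through ``all but countably many $r$'' plus monotonicity. Once $\nu=\Hmeas$ is known, Bishop--Gromov on $X$ (which holds for noncollapsed Ricci limits) gives $\Hmeas(\partial B_r(x))=0$ for \emph{every} $r>0$, so weak convergence already yields $\mathrm{vol}(B_r(p_i))\to\Hmeas(B_r(x))$ at every radius directly.
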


\begin{thm}\cite{Co,CC2}\label{pre:CC_max_vol}
	Let $X\in\mathcal{M}(n,-\delta,v)$ and $x\in X$.\\
	(1) If $$d_{GH}(B_1(x),B_1^n(0))\le\delta,$$ then
	$$\Hmeas(B_1(x))\ge (1-\Psi(\delta|n))\mathrm{vol}(B_1^n(0)).$$
	(2) If $$\Hmeas(B_1(x))\ge (1-\delta)\mathrm{vol}(B_1^n(0)),$$ then $$d_{GH}(B_1(x),B_1^n(0))\le\Psi(\delta|n).$$
\end{thm}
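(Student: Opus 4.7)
The plan is to prove both parts by a compactness/contradiction argument that reduces the quantitative statement to a rigidity theorem in the limit, using Gromov's precompactness and the volume convergence theorem \ref{pre:CC_vol_conv}.

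\textbf{Part (1).} Suppose the conclusion fails. Then there exist $\delta_i \to 0$ and $(X_i, x_i) \in \mathcal{M}(n, -\delta_i, v)$ with $d_{GH}(B_1(x_i), B_1^n(0)) \le \delta_i$ yet $\Hmeas(B_1(x_i)) \le (1 - \eta_0)\mathrm{vol}(B_1^n(0))$ for some fixed $\eta_0 > 0$. Each $X_i$ is itself the limit of manifolds $(M_{i,j}, p_{i,j})$ with $\Ric \ge -(n-1)\delta_i$ and $\mathrm{vol}(B_1(p_{i,j})) \ge v$, and Theorem \ref{pre:CC_vol_conv} gives $\mathrm{vol}(B_1(p_{i,j})) \to \Hmeas(B_1(x_i))$ as $j \to \infty$. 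By a standard diagonal selection I extract manifolds $(M_k, p_k) \ghto (\mathbb{R}^n, 0)$ with $\Ric \ge -(n-1)\delta_k$ such that $|\mathrm{vol}(B_1(p_k)) - \Hmeas(B_1(x_k))| \to 0$. Applying Theorem \ref{pre:CC_vol_conv} one more time to $(M_k, p_k) \ghto (\mathbb{R}^n, 0)$ yields $\mathrm{vol}(B_1(p_k)) \to \mathrm{vol}(B_1^n(0))$, which contradicts $\Hmeas(B_1(x_k)) \le (1 - \eta_0)\mathrm{vol}(B_1^n(0))$.

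\textbf{Part (2).} Again by contradiction, pick $(X_i, x_i) \in \mathcal{M}(n, -\delta_i, v)$ with $\delta_i \to 0$, $\Hmeas(B_1(x_i)) \ge (1 - \delta_i)\mathrm{vol}(B_1^n(0))$, and $d_{GH}(B_1(x_i), B_1^n(0)) \ge \eta_0$ for some fixed $\eta_0 > 0$. After passing to a subsequence and diagonalizing through the approximating manifolds of each $X_i$, Gromov's precompactness produces a limit $(X_i, x_i) \ghto (Y, y)$; this $Y$ is itself a noncollapsing Ricci limit of manifolds with $\Ric \ge -(n-1)\delta_i \to 0$, and therefore satisfies Bishop-Gromov comparison with constant $\kappa = 0$. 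Theorem \ref{pre:CC_vol_conv} upgrades to the limit to give $\Hmeas(B_1(y)) \ge \mathrm{vol}(B_1^n(0))$, while the GH lower bound $d_{GH}(B_1(y), B_1^n(0)) \ge \eta_0$ is inherited from the sequence.

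It remains to show that the constraints on $Y$ force $B_1(y)$ to be isometric to $B_1^n(0)$. Bishop-Gromov monotonicity on $Y$ says that $r \mapsto \Hmeas(B_r(y))/\mathrm{vol}(B_r^n(0))$ is non-increasing, while the Euclidean density bound for noncollapsing Ricci limits with $\Ric \ge 0$ shows this ratio is $\le 1$ for every $r > 0$. Combined with the lower bound at $r=1$, the ratio equals $1$ identically on $(0,1]$, so the Cheeger-Colding volume-cone-implies-metric-cone rigidity forces $B_1(y)$ to be a Euclidean metric cone of density $1$, hence isometric to $B_1^n(0)$; this contradicts $d_{GH}(B_1(y), B_1^n(0)) \ge \eta_0$. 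The main obstacle, and the one nontrivial input I would import as a black box, is precisely this volume-cone rigidity: the compactness step in both parts is routine, but it reduces everything to the $\Ric \ge 0$ rigid endpoint, where the hard work of Cheeger-Colding does the job.
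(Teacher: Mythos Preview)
The paper does not give its own proof of this statement: Theorem~\ref{pre:CC_max_vol} is quoted in the preliminaries with the citation \cite{Co,CC2} and used as a black box. There is therefore nothing in the paper to compare your argument against.

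On its own merits, your compactness/contradiction reduction is the standard route and is essentially correct. Two small remarks. In Part~(1), when you write $(M_k,p_k)\ghto(\mathbb{R}^n,0)$, strictly speaking you have only arranged $d_{GH}(B_1(p_k),B_1^n(0))\to 0$; to invoke Theorem~\ref{pre:CC_vol_conv} you should pass to a further subsequence so that $(M_k,p_k)$ converges in pointed GH to some $(Y,y)$, note $B_1(y)$ is isometric to $B_1^n(0)$, and then conclude $\mathrm{vol}(B_1(p_k))\to\Hmeas(B_1(y))=\mathrm{vol}(B_1^n(0))$. This is cosmetic. In Part~(2), your identification of the nontrivial input is exactly right: everything reduces to the rigid statement that a noncollapsing Ricci limit with $\Ric\ge 0$ and $\Hmeas(B_1(y))=\mathrm{vol}(B_1^n(0))$ has $B_1(y)$ isometric to $B_1^n(0)$, which is Colding's maximal-volume rigidity (or, as you phrase it, the density-$1$ case of volume-cone $\Rightarrow$ metric-cone). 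That black box is precisely the content of \cite{Co,CC2}, so your argument is circular only in the harmless sense that it reproduces the well-known reduction of the quantitative statement to the rigid one.
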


The following facts follow from Theorems \ref{pre:CC_vol_conv} and \ref{pre:CC_max_vol}.
\begin{lem}\label{lem:reg_param}
	Let $X\in\mathcal{M}(n,-1,v)$.\\
	(1) Given $\epsilon,\delta>0$, there are $\epsilon'=\Psi(\epsilon|n)$ and $\delta'=\delta/3$ such that $\mathcal{R}_{\epsilon,\delta}\subseteq \intrp$.\\
	(2) Let $A$ be a compact subset of $\mathcal{R}$. Then for any $\epsilon>0$, there is $\delta>0$ such that $A \subseteq \mathcal{R}_{\epsilon,\delta}$.
\end{lem}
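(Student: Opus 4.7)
The plan is to prove (1) by combining Cheeger--Colding's two-way rigidity between GH closeness and near-maximal volume (Theorem \ref{pre:CC_max_vol}) with Bishop--Gromov monotonicity, and then to derive (2) from (1) by a compactness argument.

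For part (1), I would fix $x \in \mathcal{R}_{\epsilon,\delta}$ and first translate the GH hypothesis into a volume estimate. Applying Theorem \ref{pre:CC_max_vol}(1) at every scale $0 < r \le \delta$ (after rescaling by $1/r$, which only improves the effective Ricci bound) yields
$$\Hmeas(B_r(x)) \ge (1-\Psi(\epsilon|n))\,\mathrm{vol}(B_r^n(0))$$
uniformly in $r$. Now fix a small parameter $\eta \ll \delta$ and take any $y$ with $d(x,y) < \eta$. At the mesoscopic scale $R = \delta/3$, the inclusion $B_R(y) \supseteq B_{R-\eta}(x)$ transfers near-maximality of volume to $y$. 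I would then invoke Bishop--Gromov at $y$ to propagate near-maximality of the volume ratio from the single scale $R$ down to every smaller scale $0 < r \le R$, obtaining
$$\Hmeas(B_r(y)) \ge (1-\Psi'(\epsilon|n))\,\mathrm{vol}(B_r^n(0)),$$
where $\Psi'$ also absorbs $\eta/\delta$ and the hyperbolic-versus-Euclidean comparison factor at small scales. Finally, Theorem \ref{pre:CC_max_vol}(2) (rescaled) converts this back into GH closeness, giving $d_{GH}(B_r(y), B_r^n(0)) \le \epsilon' r$ for all $0 < r \le \delta/3$ with $\epsilon' = \Psi(\epsilon|n)$. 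Thus the entire $\eta$-ball around $x$ lies in $\mathcal{R}_{\epsilon',\delta/3}$, which is precisely $x \in \intrp$.

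For part (2), I would choose $\epsilon_0 > 0$ small enough that the $\Psi(\epsilon_0|n)$ produced by (1) is at most $\epsilon$. For each $x \in A \subseteq \mathcal{R}$, the definition of $\mathcal{R}$ yields some $\delta_x > 0$ with $x \in \mathcal{R}_{\epsilon_0,\delta_x}$; by (1) there is an open neighborhood of $x$ contained in $\mathcal{R}_{\Psi(\epsilon_0|n),\delta_x/3} \subseteq \mathcal{R}_{\epsilon,\delta_x/3}$. These neighborhoods form an open cover of the compact set $A$, from which finitely many --- say those around $x_1,\dots,x_k$ --- cover $A$. Setting $\delta := \min_i \delta_{x_i}/3$ then gives $A \subseteq \mathcal{R}_{\epsilon,\delta}$, since $\mathcal{R}_{\epsilon,\delta_{x_i}/3} \subseteq \mathcal{R}_{\epsilon,\delta}$ whenever $\delta \le \delta_{x_i}/3$.

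The hard part is the small-$r$ regime in (1): when $r$ is comparable to or smaller than $\eta = d(x,y)$, no direct triangle inequality compares $B_r(y)$ with a ball centered at $x$, and one cannot simply transport the rigidity of $x$ scale-by-scale to $y$. Bishop--Gromov monotonicity is precisely what circumvents this, transporting near-maximality from one well-controlled mesoscopic scale $R$ down to all smaller scales at $y$ uniformly. Keeping every error term within a single $\Psi(\epsilon|n)$ while handling the rescaling of the Ricci bound is then routine bookkeeping once this conceptual structure is in place.
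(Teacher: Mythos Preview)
Your argument for (1) is essentially the paper's: convert GH closeness to near-maximal volume via Theorem~\ref{pre:CC_max_vol}(1), propagate to nearby points at all smaller scales by Bishop--Gromov monotonicity, then convert back via Theorem~\ref{pre:CC_max_vol}(2). The only cosmetic difference is that the paper works with the full ball $B_{\delta/3}(x)$ while you use a smaller $\eta$-ball; since the goal is only to place $x$ in the interior, either suffices.

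For (2) you take a genuinely different route. The paper argues by contradiction: it defines $\delta(x)$ as the largest scale at which $x$ is $(\epsilon,\delta)$-regular, assumes a sequence $x_i \in A$ with $\delta(x_i)\to 0$, passes to a subsequential limit $y\in A$, and then uses volume convergence together with Theorem~\ref{pre:CC_max_vol} at $y$ to force $x_i\in\mathcal{R}_{\epsilon,\delta_0}$ for a fixed $\delta_0>0$, a contradiction. Your proof instead feeds (1) directly into an open-cover compactness argument: each $x\in A$ has an open neighborhood inside some $\mathcal{R}_{\epsilon,\delta_x/3}$, a finite subcover yields finitely many scales, and the minimum works uniformly. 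Your approach is shorter and makes the dependence of (2) on (1) explicit; the paper's approach is self-contained (it does not invoke (1)) and gives the same conclusion via a sequential limit. Both are correct.
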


\begin{proof}
	We include the proof here for readers' convenience.
	
	(1) Let $x\in\mathcal{R}_{\epsilon,\delta}$. By definition, this means
	$$d_{GH}(B_r(x),B_r^n(0))\le \epsilon r $$
	for all $0<r\le \delta$. By Theorem \ref{pre:CC_max_vol} and Bishop-Gromov relative volume comparison, 
	$$\Hmeas(B_s(y))\ge (1-\Psi(\epsilon|n))\mathrm{vol}(B_s^n(0))$$
	holds for all $y\in B_{\delta/3}(x)$ and all $0<s\le \delta/3$.
	Applying Theorem \ref{pre:CC_max_vol}(2), we see that 
	$$d_{GH}(B_s(y),B_s^n(0))\le \Psi(\epsilon|n)s,$$
	that is, $y\in \mathcal{R}_{\Psi(\epsilon|n),\delta/3}$ for all $y\in B_{\delta/3}(x)$. Therefore, $x\in \intrp$, where $\epsilon'=\Psi(\epsilon|n)$ and $\delta'=\delta/3$.
	
	(2) Let $\epsilon>0$. For each $x\in A$, we pick $\delta(x)>0$ as the largest $\delta$ so that
	$$d_{GH}(B_r(x),B_r^n(0))\le \epsilon r$$
	holds for all $0<r\le\delta$. It suffices to show that $\delta(x)$ has a uniform positive lower bound for all $x\in A$. We argue by contradiction. Suppose that there is a sequence $x_i\in A$ with $\delta(x_i)\to 0$. Then by compactness of $A$, $x_i$ subconverges to some $y\in A$, which is also regular. Therefore, for $\epsilon'>0$, which will be determined later, there is $\delta_0=\delta_0(\epsilon',y)>0$ such that $y\in\mathcal{R}_{\epsilon',\delta_0}$. Thus it follows from Theorem \ref{pre:CC_max_vol}(1) that
	$$\Hmeas(B_{\delta_0}(y))\ge (1-\Psi(\epsilon'|n))\mathrm{vol}(B_{\delta_0}^n(0)).$$
	By volume convergence,
	$$\Hmeas(B_{\delta_0}(x_i))\ge (1-2\Psi(\epsilon'|n))\mathrm{vol}(B_{\delta_0}^n(0))$$
	for $i$ large, thus 
	$$d_{GH}(B_{r}(x_i),B_{r}^n(0))\le \Psi'(\epsilon'|n) r$$
	for all $0<r\le \delta_0$. Now we choose $\epsilon'>0$ so that $\Psi'(\epsilon'|n)\le \epsilon$, then $x_i\in \mathcal{R}_{\epsilon,\delta_0}$ for all $i$ large. A contradiction to $\delta(x_i)\to 0$. This complete the proof.
\end{proof}

\begin{thm}\cite{CC2}\label{pre:CC_sing_dim}
	Let $X\in\mathcal{M}(n,-1,v)$. Then its singular set $\mathcal{S}$ has Hausdorff dimension at most $n-2$. 
\end{thm}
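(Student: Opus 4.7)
The plan is to prove the bound by stratifying $\mathcal{S}$ according to the splitting behavior of tangent cones and estimating each stratum separately. First, for $0 \le k \le n-1$, define the $k$-th stratum
$$\mathcal{S}^k := \{x \in X : \text{no tangent cone at } x \text{ splits off an isometric factor } \mathbb{R}^{k+1}\},$$
yielding $\mathcal{S}^0 \subseteq \mathcal{S}^1 \subseteq \cdots \subseteq \mathcal{S}^{n-1}$. Any tangent cone that splits off $\mathbb{R}^n$ must itself be isometric to $\mathbb{R}^n$, and Theorem \ref{pre:CC_max_vol}(2) applied at small scales then forces $x \in \mathcal{R}$; hence $\mathcal{S} = \mathcal{S}^{n-1}$. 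It therefore suffices to prove $\dimH \mathcal{S}^k \le k$ for every $k$, and in particular to rule out the top stratum $\mathcal{S}^{n-1} \setminus \mathcal{S}^{n-2}$.

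The critical first step is to show that noncollapsing rules out codimension-one singularities, i.e.\ that $\mathcal{S}^{n-1} = \mathcal{S}^{n-2}$. The idea is that in the noncollapsed setting every tangent cone is a metric cone (via the volume-cone-implies-metric-cone theorem of Cheeger--Colding), so if $x \in \mathcal{S}^{n-1} \setminus \mathcal{S}^{n-2}$ then some tangent cone has the form $\mathbb{R}^{n-1} \times C(Z)$ with $Z$ a compact zero-dimensional space of diameter at most $\pi$. The uniform volume lower bound inherited from (\ref{eq:local_cond}), combined with Theorem \ref{pre:CC_vol_conv}, forces $\Hmeas$ on unit balls of the cone to match that of $\mathbb{R}^n$, and hence $C(Z)$ must be a full line $\mathbb{R}$, giving an $\mathbb{R}^n$ splitting and contradicting $x \in \mathcal{S}^{n-1}$.

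The second step is to establish $\dimH \mathcal{S}^k \le k$ for $k \le n-2$. The plan is a standard covering argument using quantitative stratification: for each $\eta > 0$ introduce the set of points whose ball $B_r(x)$ fails to be $\eta r$-close in Gromov--Hausdorff distance to a product $\mathbb{R}^{k+1} \times Y$, and then invoke the cone-splitting dichotomy -- once a ball is almost a metric cone at two sufficiently separated scales with base point near the tip, Bishop--Gromov volume comparison together with the Cheeger--Colding almost-splitting theorem upgrade it to an additional Euclidean factor. This lets one cover the quantitative stratum at scale $r$ by a controlled number of balls whose $k$-dimensional content remains bounded; sending $\eta \to 0$ through a Vitali-type refinement then yields the desired Hausdorff dimension bound.

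The principal obstacle is the codimension-one step, which rests on the full strength of the volume-cone-implies-metric-cone theorem and on propagating a splitting line from the limit tangent cone back through the almost-splitting theorem to manufacture the extra Euclidean factor. Once this rigidity is in place, the stratum-by-stratum Minkowski estimates in the covering argument are technical but conceptually mechanical.
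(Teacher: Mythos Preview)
The paper does not supply its own proof of this statement; it is quoted from Cheeger--Colding \cite{CC2} as a preliminary fact and used as a black box. Your outline follows the strategy of that original source (stratify by Euclidean splitting of tangent cones, show $\mathcal{S}^{n-1}=\mathcal{S}^{n-2}$, then bound $\dimH\mathcal{S}^k\le k$ by a covering argument), so there is no competing approach in the present paper to compare against.

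That said, the codimension-one step in your sketch has a real gap. You claim that the noncollapsing bound, via Theorem~\ref{pre:CC_vol_conv}, ``forces $\Hmeas$ on unit balls of the cone to match that of $\mathbb{R}^n$,'' and hence $C(Z)=\mathbb{R}$. This is false: volume convergence together with $\mathrm{vol}(B_1(p_i))\ge v$ only gives $\Hmeas(B_1)\ge v$ in the limit and in any tangent cone, not $\Hmeas(B_1)=\omega_n$. Indeed, the density of the unit ball at the cone tip equals the volume density $\Theta(x)$, which for $x\in\mathcal{S}$ is strictly below $\omega_n$ (else Theorem~\ref{pre:CC_max_vol}(2) would force $x\in\mathcal{R}$). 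So your argument, as written, would prove every point is regular. Ruling out the half-space $\mathbb{R}^{n-1}\times[0,\infty)$ and the wedges $\mathbb{R}^{n-1}\times C(\{p,q\})$ with $d(p,q)<\pi$ is precisely the delicate part of \cite{CC2}; it requires analyzing the one-dimensional factor as a noncollapsed limit in its own right and showing it cannot carry a boundary point, rather than a direct volume-matching. The rest of your plan (quantitative stratification and covering for $\dimH\mathcal{S}^k\le k$) is fine at the level of an outline.
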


\begin{thm}\cite{CC2}\label{pre:CC_cnt}
	Let $X\in\mathcal{M}(n,-1,v)$ and let $A$ be a closed subset of $X$ with $\mathcal{H}^{n-1}(A)=0$. Then $X-A$ is path connected. Moreover, given any $\delta>0$ and any pair of points $x,y\in X-A$, a path $\sigma$ in $X-A$ between $x,y$ can be chosen that
	$$\mathrm{length}(\sigma) \le (1+\delta) d(x,y).$$
\end{thm}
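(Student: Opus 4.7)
The plan is to apply the Cheeger--Colding segment inequality on the noncollapsed Ricci limit $X$. Since $\mathcal{H}^{n-1}(A)=0$, for any $\eta>0$ one can cover $A$ by countably many balls $B_{r_j}(a_j)$ with $\sum r_j^{n-1}<\eta$. Combined with Bishop--Gromov on $X$ (which yields $\mathcal{H}^n(B_r(z))\le C(n)r^n$), this produces an open neighborhood $U_\eta\supset A$ with $\mathcal{H}^n(U_\eta)$ arbitrarily small. In particular $\mathcal{H}^n(A)=0$, so $X-A$ is dense.

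Fix $x,y\in X-A$ and set $L=d(x,y)$. I would subdivide a minimizing geodesic from $x$ to $y$ by intermediate points $x=p_0,p_1,\dots,p_N=y$ with $d(p_i,p_{i+1})\approx L/N$, and pick small radii $s\ll L/N$ with $B_s(p_i)\subset X$. The Cheeger--Colding segment inequality, which holds on the approximating manifolds $M_i^n$ and passes to $X$ via the volume convergence of Theorem \ref{pre:CC_vol_conv}, applied with $f=\chi_{U_\eta}$ on each consecutive pair $B_s(p_i)\times B_s(p_{i+1})$, shows that the set of $(a,b)$ whose minimizing geodesic enters $U_\eta$ has measure $O(\eta)$ in the product. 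A Fubini/pigeonhole argument on $\prod_{i=1}^{N-1} B_s(p_i)$ then produces a single chain $q_i\in B_s(p_i)$ such that for every $i$ the consecutive minimizing geodesic $\gamma_{q_i,q_{i+1}}$ lies in $X-U_\eta\subset X-A$.

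To tie the chain to the prescribed endpoints $x$ and $y$, I would iterate the same construction at geometrically shrinking scales $s_k=s\cdot 2^{-k}$ near $x$ and near $y$, producing paths from $x$ to $q_0$ and from $q_N$ to $y$ inside $X-A$ whose total lengths sum as geometric series to $O(s)$. Concatenating everything gives a path $\sigma$ in $X-A$ from $x$ to $y$ with $\mathrm{length}(\sigma)\le \sum_i d(q_i,q_{i+1})+O(s)\le L+O(Ns)$, and choosing $N\gg 1/\delta$ together with $s\ll \delta L/N$ yields $\mathrm{length}(\sigma)\le(1+\delta)L$.

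The main obstacle, as I see it, is the endpoint refinement: the constants in the segment inequality depend on the scale and on the measures of the balls involved, so one must use Bishop--Gromov on $X$ to verify that each ball in the iterated construction remains uniformly noncollapsed relative to its own radius, guaranteeing that a "good chain" persists at every scale $s_k$. A secondary technical point is the careful statement of the segment inequality in the limit space; one can either invoke its extension to Ricci limits directly or carry out the construction on the $M_i$ first (where the smooth segment inequality applies) and pass to the limit, using Theorem \ref{pre:CC_vol_conv} to transport the measure estimates on $U_\eta$ between the approximating sequence and $X$.
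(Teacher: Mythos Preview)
The paper does not supply its own proof of this statement; it is quoted from Cheeger--Colding \cite{CC2} as a preliminary result without argument. Your outline via the segment inequality, a Fubini-type chaining through intermediate balls, and an endpoint iteration at dyadic scales controlled by Bishop--Gromov is exactly the approach in \cite{CC2}, and the two technical points you single out (uniform noncollapsing of the iterated balls, and transporting the segment inequality to the limit space) are precisely the ones that require care there.
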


\begin{thm}\cite{CC2}\label{pre:CC_reg_contract}
	Given dimension $n$, there is a constant $\epsilon_0(n)>0$ such that the following holds for all $0<\epsilon\le \epsilon_0(n)$.
	
	Let $X\in\mathcal{M}(n,-1,v)$ and $x\in X$ such that 
	$$d_{GH}(B_\delta(x),B_\delta^n(0))\le \epsilon \delta,$$
	where $\delta>0$. Then $B_r(x)$ is contractible in $B_{2r}(x)$ for all $0<r\le \delta/10$.
\end{thm}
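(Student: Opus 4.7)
The plan is to derive the contractibility of $B_r(x)$ in $B_{2r}(x)$ from the almost-Euclidean nature of $X$ at every small scale around $x$, by constructing the contracting homotopy through continuous Gromov-Hausdorff approximations to a Euclidean ball.

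First, I would promote the scale-$\delta$ hypothesis to uniform closeness at all finer scales inside a slightly smaller ball. Specifically, the assumption $d_{GH}(B_\delta(x),B_\delta^n(0))\le\epsilon\delta$ combined with Lemma \ref{lem:reg_param}(1) gives that for every $y\in B_{\delta/3}(x)$ and every $0<s\le\delta/3$,
$$d_{GH}(B_s(y),B_s^n(0))\le \Psi(\epsilon|n)\,s.$$
Fixing $0<r\le\delta/10$, I would then upgrade the measurable $\epsilon$-GH approximations between $B_{2r}(x)$ and $B_{2r}^n(0)$ to genuinely continuous maps
$$F:B_{2r}(x)\to B_{2r}^n(0),\qquad G:B_{2r}^n(0)\to B_{2r}(x)$$
with almost-isometry error $\Psi(\epsilon|n)r$ and with $\sup d(G\circ F,\mathrm{id}),\;\sup d(F\circ G,\mathrm{id})\le \Psi(\epsilon|n)r$. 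On the approximating smooth manifolds $M_i^n$ such continuous maps are produced via the almost-harmonic splitting charts of Cheeger--Colding (valid under a Ricci lower bound together with almost-maximal volume, which the iterated almost-Euclidean property supplies), and continuity then passes to the limit.

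Given $F$ and $G$, I would define the straight-line contraction pulled back through Euclidean space:
$$H_0(y,t)=G\bigl((1-t)\,F(y)+t\,F(x)\bigr),\qquad y\in B_r(x),\ t\in[0,1].$$
This is jointly continuous, its image lies in $B_{r+\Psi(\epsilon|n)r}(x)\subset B_{2r}(x)$, and its endpoints satisfy $d(H_0(y,0),y)\le\Psi(\epsilon|n)r$ and $d(H_0(y,1),x)\le\Psi(\epsilon|n)r$. To turn $H_0$ into an honest contraction I would prepend a short homotopy $y\leadsto G(F(y))$ and append $G(F(x))\leadsto x$, each constructed at the strictly smaller scale $\Psi(\epsilon|n)r$ by applying the same statement inductively. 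The excursions at successive scales form a geometric series, so the total homotopy stays inside $B_{r+C\Psi(\epsilon|n)r}(x)\subset B_{2r}(x)$ for $\epsilon$ small enough, with the factor $\delta/10$ in the hypothesis providing the needed slack.

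The main obstacle is producing the continuous GH approximations $F$ and $G$ with the required error control; without continuity the scale-by-scale correction cannot be assembled into a continuous homotopy, and this is the only non-formal input, relying on the Cheeger--Colding almost-splitting/almost-harmonic coordinate machinery. A morally equivalent alternative, which bypasses the explicit homotopy construction, would be to invoke directly the Cheeger--Colding bi-Hölder homeomorphism from an almost-Euclidean metric ball to a Euclidean ball, from which contractibility follows tautologically; either route routes the proof through the same deeper piece of noncollapsing Ricci-limit regularity theory.
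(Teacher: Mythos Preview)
The paper does not give its own proof of this statement: Theorem~\ref{pre:CC_reg_contract} is stated as a preliminary result and attributed to Cheeger--Colding \cite{CC2} without argument. So there is no proof in the paper to compare your proposal against; the author treats this contractibility as a black box from the regularity theory.

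As for your sketch on its own merits, the overall strategy is the right one and indeed mirrors the Reifenberg-type arguments in \cite{CC2}: propagate the almost-Euclidean condition to all smaller scales, build continuous almost-isometries to Euclidean balls via the harmonic almost-splitting maps, and use the Euclidean contraction. The alternative you mention at the end---invoking directly the Cheeger--Colding bi-H\"older homeomorphism between an $(\epsilon,\delta)$-regular ball and a Euclidean ball---is the cleanest route and is in fact how this is normally packaged. One caution about your primary route: the step where you ``apply the same statement inductively'' to connect $y$ to $G(F(y))$ at scale $\Psi(\epsilon|n)r$ is circular as written, since that is exactly the statement you are proving. In the actual Cheeger--Colding argument the iteration happens at the level of constructing the homeomorphism (gluing charts across scales), not by invoking contractibility recursively; you would need to unwind your induction into such a direct construction, or else simply cite the bi-H\"older homeomorphism and be done.
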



\subsection{Equivariant GH convergence with Ricci and volume lower bounds}

In the study of fundamental groups associated to a convergent sequence
$$(M_i,x_i)\ghto (X,x)$$
with conditions (\ref{eq:local_cond}), it is natural to take the universal covers and their convergence into account. A powerful tool is the equivariant Gromov-Hausdorff convergence introduced by Fukaya-Yamaguchi \cite{FY}. After passing to a subsequence, we can obtain convergence
\begin{equation}\label{CD:cover}
	\begin{CD}
		(\widetilde{M}_i,\tilde{x}_i,\Gamma_i) @>GH>> 
		(Y,y,\Gamma)\\
		@VV\pi_i V @VV\pi V\\
		(M_i,x_i) @>GH>> (X,x).
	\end{CD}
\end{equation}
Here $\Gamma_i=\pi_1(M_i,x_i)$ acts isometrically, freely, and discretely on the universal cover $(\widetilde{M}_i,\tilde{x}_i)$. This sequence of $\Gamma_i$-actions converges to a limit isometric $\Gamma$-action on the limit space $Y$. Due to the noncollapsing condition on $(M_i,x_i)$, the limit group $\Gamma$ is a discrete subgroup of $\mathrm{Isom}(Y)$; see Corollary \ref{cor:discrete_limit}.

We below state a result by Chen-Rong-Xu \cite{CRX}, which roughly states that if a point  $z\in Y$ is sufficiently regular, then $\Gamma$-action cannot fix $z$.

\begin{thm}\cite[Theorem 2.1 and Corollary 2.2]{CRX}\label{pre:no_fix_sing}
	Given $n,v>0$, there is a constant $\epsilon(n,v)>0$ such that the following holds.
	
	In the convergence (\ref{CD:cover}) with conditions (\ref{eq:local_cond}), if $z\in Y$ is $(\epsilon,\delta)$-regular, where $\delta>0$, then $\Gamma$ acts freely on $B_{\delta/4}(z)$.
\end{thm}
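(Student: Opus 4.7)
The plan is to argue by contradiction. Suppose there exist $\gamma \in \Gamma \setminus \{e\}$ and $w \in B_{\delta/4}(z)$ with $\gamma \cdot w = w$. I would first use Lemma \ref{lem:reg_param}(1) to promote the hypothesis $z \in \mathcal{R}_{\epsilon,\delta}$ to $z \in \intrp$ with $\epsilon' = \Psi(\epsilon|n)$ and $\delta' = \delta/3$, so that $w \in B_{\delta/4}(z)$ is itself $(\epsilon',\delta')$-regular in $Y$ and Theorem \ref{pre:CC_max_vol}(1) gives the near-Euclidean bound
\[
\Hmeas(B_r(w)) \ge (1-\Psi(\epsilon|n))\,\mathrm{vol}(B_r^n(0)), \qquad r \le \delta/3.
\]

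Since $\Gamma$ is discrete in $\mathrm{Isom}(Y)$ (Corollary \ref{cor:discrete_limit}) and the isotropy at a point of a proper metric space is compact, the stabilizer $\Gamma_w$ is a finite group. I would choose $\gamma$ of prime order $p \ge 2$ in $\Gamma_w$ and approximate it equivariantly by $\gamma_i \in \Gamma_i$ with $\gamma_i \to \gamma$, $\tilde{w}_i \to w$. The $p$ powers $\gamma_i^0, \ldots, \gamma_i^{p-1}$ then move $\tilde{w}_i$ by distances tending to $0$, and by the freeness of the $\Gamma_i$-action on $\widetilde{M}_i$ they are distinct non-identity elements for $i$ large. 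Downstairs, $w_i := \pi_i(\tilde{w}_i) \in M_i$ therefore admits $p$ distinct preimages contracting onto $\tilde{w}_i$, and $\gamma_i$ represents a non-trivial loop in $\pi_1(M_i,w_i)$ of length tending to $0$.

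The heart of the argument is a volume contradiction driven by this $p$-fold local multiplicity. Choosing $r$ larger than the diameter of the $p$-point preimage cluster but smaller than $d_0 := d(w, \Gamma \cdot w \setminus \{w\}) > 0$, which is positive by discreteness of $\Gamma$, the covering $\pi_i$ becomes essentially $p$-to-$1$ on $B_r(\tilde{w}_i)$ for $i$ large, so volume convergence (Theorem \ref{pre:CC_vol_conv}) yields
\[
\Hmeas(B_r(\pi(w))) = \tfrac{1}{p}\,\Hmeas(B_r(w)) \le \tfrac{1+\Psi(\epsilon|n)}{p}\,\mathrm{vol}(B_r^n(0)).
\]
Pulling this back through $\mathrm{vol}(B_r(x_i^{\pi(w)})) \to \Hmeas(B_r(\pi(w)))$ and applying the converse direction Theorem \ref{pre:CC_max_vol}(2), one obtains that $B_r(x_i^{\pi(w)}) \subset M_i$ is uniformly far from $B_r^n(0)$ in Gromov-Hausdorff distance. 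One then plans to combine this with the noncollapsing density lower bound for $M_i$ and Bishop-Gromov monotonicity on the smooth manifold $M_i$ to produce a contradiction.

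The main obstacle is the quantitative sharpening of this final step: since the ratio $1/p$ can exceed the noncollapsing density $v/\mathrm{vol}(B_1^n(0))$ when $v$ is small, a direct Bishop-Gromov comparison is insufficient for arbitrary $v$. Closing the gap uniformly in $v > 0$ requires the equivariant stability machinery of Pan-Rong, which controls how short non-trivial loops at a nearly regular point interact with the freeness of the $\Gamma_i$-action upstairs. This is precisely where the dependence of the threshold $\epsilon(n,v)$ on the noncollapsing constant $v$ enters.
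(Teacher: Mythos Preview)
This theorem is cited from \cite{CRX}; the paper supplies no proof of its own, so there is no in-paper argument to compare against.

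Your proposal has a genuine gap, which you correctly diagnose but do not close. The estimate $\Hmeas(B_r(\pi(w)))\le \tfrac{1+\Psi}{p}\,\mathrm{vol}(B_r^n(0))$ is fine, but the global noncollapsing constant $v$ on $M_i$ cannot contradict it: Bishop--Gromov from the diameter scale down only gives $\mathrm{vol}(B_r)\gtrsim \tfrac{v}{v(n,-1,D)}\,\mathrm{vol}(B_r^n(0))$, which bounds $p$ from above rather than forcing $p=1$. Appealing vaguely to Theorem~\ref{pre:nss} does not help either, since that result lower-bounds the \emph{maximal} displacement $D_{1,y}(H)$ of a nontrivial subgroup on a ball and says nothing against $\gamma$ fixing one point while moving others. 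The actual \cite{CRX} argument does not route the contradiction through the coarse constant $v$; it uses the much sharper input that $(\epsilon,\delta)$-regularity of $z$ (hence of $w$) forces almost-maximal volume $\mathrm{vol}(B_r(\tilde w_i))\ge(1-\Psi(\epsilon|n))\,\mathrm{vol}(B_r^n(0))$ in $\widetilde M_i$ at \emph{every} scale $r\le\delta'$, and from this derives a quantitative space-form rigidity for the intermediate quotient $\overline M_i=\widetilde M_i/\langle\gamma_i\rangle$ near $\bar w_i$ that is incompatible with $\overline M_i$ being a smooth manifold carrying an arbitrarily short nontrivial loop there. The contradiction lives entirely at the regular scale $\delta'$, not against the unrelated global density $v$. (A small aside: that $\gamma_i,\dots,\gamma_i^{p-1}$ are distinct nontrivial elements follows from $\gamma_i^{\,j}\to\gamma^{\,j}\neq e$ for $1\le j\le p-1$, i.e.\ essentially Lemma~\ref{lem:order}, not from freeness of the $\Gamma_i$-action.)
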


We will also need a quantitative result describing the action of any non-trivial subgroup of $\mathrm{Isom}(Y)$, which is proved in a joint work by Rong and the author \cite{PR}. Given a subgroup $H\leq \mathrm{Isom}(Y)$, we write its displacement on a $1$-ball by
$$D_{1,y}(H)=\sup \{d(hz,z) | z\in B_1(y),h\in H \}.$$

\begin{thm}\cite[Theorem 0.8]{PR}\label{pre:nss}
	Given $n,v>0$, there is a constant $\delta(n,v)>0$ such that for any space $(Y,y)\in\mathcal{M}(n,-1,v)$ and any nontrivial subgroup of $H$ of $\mathrm{Isom}(X)$, $D_{1,y}(H)\ge\delta(n,v)$ holds.
\end{thm}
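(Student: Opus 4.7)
The plan is to argue by contradiction using equivariant Gromov-Hausdorff compactness. Suppose the asserted uniform bound fails; then one finds $(Y_k,y_k)\in\mathcal{M}(n,-1,v)$ and nontrivial subgroups $H_k\leq\mathrm{Isom}(Y_k)$ with $D_{1,y_k}(H_k)\to 0$. Replacing each $H_k$ by its closure (which does not change displacement and is still nontrivial) and using equivariant GH precompactness of $\mathcal{M}(n,-1,v)$ together with the Chabauty topology on closed subgroups of isometries, I extract a subsequence with $(Y_k,y_k,\overline{H_k})\to (Y_\infty,y_\infty,G)$ equivariantly. Continuity of the displacement functional under equivariant GH convergence forces $D_{1,y_\infty}(G)=0$, so every element of $G$ fixes $B_1(y_\infty)$ pointwise, and hence $G=\{e\}$ by the rigidity of isometries on a geodesic space.

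The task is to convert the mismatch between $G=\{e\}$ and each $H_k\neq\{e\}$ into an actual contradiction, which I do by a blow-up at an appropriately chosen scale. For each $k$ pick a nontrivial $h_k\in H_k$. Since a nontrivial isometry cannot fix any open set and $r\mapsto D_{r,y_k}(h_k)$ is non-decreasing and strictly positive, one can choose a scale $r_k>0$ at which $D_{r_k,y_k}(h_k)$ is comparable to $r_k$; the decay $D_{1,y_k}(h_k)\to 0$ together with monotonicity forces $r_k\to 0$. On the rescaled pointed spaces $(r_k^{-1}Y_k,y_k)$ the Ricci lower bound improves to $-(n-1)r_k^2\to 0$, while Bishop-Gromov together with the volume non-collapsing at scale one keeps the rescaled sequence uniformly noncollapsed at the basepoint. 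A further equivariant subsequential limit $(T,0,G_\infty)$ then satisfies $T\in\mathcal{M}(n,0,v')$ for some uniform $v'>0$, and by the very choice of $r_k$ the limit group $G_\infty$ contains an element displacing some point of $B_1(0)$ by a definite amount; in particular $G_\infty\neq\{e\}$.

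To close the argument, Theorem \ref{pre:CC_sing_dim} gives that the regular set in $T$ is dense, so I can choose a regular $z\in T$ close to $0$ at which some nontrivial $g_\infty\in G_\infty$ acts nontrivially. A no-fixed-regular-point statement in the spirit of Theorem \ref{pre:no_fix_sing} then yields a contradiction: any equivariant limit of isometric subgroup actions on a noncollapsing Ricci limit sequence must act freely on a definite neighborhood of any sufficiently regular limit point. The main obstacle is precisely this last step: Theorem \ref{pre:no_fix_sing} as stated is for limits of deck transformation groups coming from universal covers, and extending it to abstract equivariant limits of arbitrary isometric subgroups requires an orbit-packing argument comparing quotient volumes of the $r_k^{-1}Y_k$ with the density of the regular limit point $z$ in $T$; this is the technical core of the argument in \cite{PR}. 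A secondary subtlety is the selection of the critical scale $r_k$, which must be calibrated so that both an upper and a lower displacement bound survive in the blown-up limit.
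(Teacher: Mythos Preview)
The paper does not prove this statement: Theorem~\ref{pre:nss} is quoted verbatim from \cite[Theorem~0.8]{PR} as a preliminary result and used as a black box (in the proof of Lemma~\ref{lem:order}). There is therefore no proof in the present paper to compare your attempt against.

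On the merits of your sketch itself, there is a genuine gap in the closing step. After the blow-up you obtain a nontrivial limit group $G_\infty$ acting on a noncollapsed nonnegative-Ricci limit $T$, and you then appeal to a ``no-fixed-regular-point'' principle to derive a contradiction. But you have not produced any fixed point: you only know some $g_\infty\in G_\infty$ \emph{moves} a point in $\overline{B_1(0)}$, which is entirely compatible with a free action (think of a translation on $\mathbb{R}^n$). Theorem~\ref{pre:no_fix_sing} asserts that limits of \emph{free} deck actions act freely near regular points; it gives no obstruction to a nontrivial free limit action, so invoking it (or a putative extension to arbitrary isometric subgroups) does not close the argument. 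In short, the sentence ``some nontrivial $g_\infty$ acts nontrivially at a regular $z$'' is not in tension with ``$G_\infty$ acts freely near $z$''---these are consistent, not contradictory.

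A second, smaller issue: the critical-scale selection needs both a lower and an upper control on $D_{r_k,y_k}(h_k)/r_k$ to ensure the limit element is nontrivial yet still has bounded displacement on the unit ball; when $h_k$ fixes $y_k$ this ratio need not tend to infinity as $r\to 0$, so the existence of such a scale is not automatic from monotonicity alone. You flag this, but it interacts with the first gap: without an upper bound on displacement at larger rescaled radii you cannot force $G_\infty$ to have an accumulation of orbit points (and hence a fixed point) anywhere. The actual argument in \cite{PR} is what resolves these points, and your outline essentially defers the substantive work back to that reference.
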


\section{Illustrative examples}\label{sec:exmp}

In this short section, we briefly review some relevant examples of convergent sequences $M_i\overset{GH}\to X$ with conditions (\ref{eq:cond}) by Otsu \cite{Ot} and Anderson \cite{An}. In particular, we shall see that in general the homomorphisms $\phi_i$ in Theorem \ref{thm:forward_surj} and $\psi_i$ in Theorem \ref{thm:backward_surj} are not injective.

\begin{exmp}\label{exmp:otsu}
Otsu \cite{Ot} constructed a sequence of doubly warped metric products on $M=S^{p+1}\times S^q$, where $p\ge 2$ and $q\ge 2$:
$$[0,b_i] \times_{f_i} S^p \times_{h_i} S^q, \quad g_i= dr^2 + f_i^2(r)ds_p^2 + h_i^2(r) ds_q^2.$$
such that
$$\Ric(g_i)\ge n-1,\quad \mathrm{diam}(g_i)=b_i\to \pi,\quad \mathrm{vol}(g_i)\ge v>0.$$
At $s=0$ or $b_i$, $f_i$ and $g_i$ satisfies
$$f_i(s)=0,\quad f'_i(s)=1,\quad h_i(s)>0,\quad \lim\limits_{i\to\infty} h_i(s)\to 0  \quad h'_i(s)=0.$$
As $i\to\infty$, $(M,g_i)$ converges to $\susp(S^p \times S^q)$, a suspension over $S^p\times S^q$.

Since $S^q$-factor is always the round sphere in the construction, we can take the antipodal $\mathbb{Z}_2$-action on $S^q$ factor and consider the quotient $(N_i,\bar{g}_i)=(M,g_i)/\mathbb{Z}_2.$ The resulting $(N_i,\bar{g}_i)$ is Riemannian because $\mathbb{Z}_2$-action is isometric and free on $(M,g_i)$.
Then as $i\to\infty$, $N_i$ converges to $X=\mathrm{Susp}(S^p \times \mathbb{R}P^q)$. In terms of fundamental groups, we have
$$\pi_1(N_i)=\mathbb{Z}_2,\quad \pi_1(X)=\mathrm{id}.$$
The forward homomorphism $\phi_i:\pi_1(N_i) \to \pi_1(X)$ has kernel $\mathbb{Z}_2$. The limit space $X$ has two singular points as the vertices of the suspension. For small $\epsilon$ and $\delta>0$, $\intr$ is homeomorphic to 
$(0,1)\times S^p \times \mathbb{R}P^q$. In particular, $\pi_1(\intr)=\mathbb{Z}_2$.

\end{exmp}

\begin{exmp}\label{exmp:EH}
	Modifying the Eguchi-Hanson metric \cite{EH} on $TS^2$, the tangent bundle of $S^2$, Anderson \cite{An} constructed a sequence of metrics $g_i$ on $M^4$, the double of the disk bundle in $TS^2$, with
	$$\Ric(g_i)\ge 0,\quad \mathrm{diam}(g_i)\le D,\quad  \mathrm{vol}(g_i)\ge v>0.$$
	$M$ is diffeomorphic to $S^2\times S^2$. Recall that the Eguchi-Hanson metric, written as $h$, on $TS^2$ is Ricci-flat and has Euclidean volume growth. It has a unique asymptotic cone as $C(\mathbb{R}P^3)=\mathbb{R}^4/\mathbb{Z}_2$. 
	
	Let $\mathcal{Z}$ be the zero-section in $TS^2$ and let $B_i=T_1(\mathcal{Z},r_i^{-2}h)$ be the tubular neighborhood of $\mathcal{Z}$ of radius $1$ with respect to the metric $r_i^{-2}h$, where $r_i\to\infty$. Modifying the metric around $\partial B_i$ and then doubling it, one obtains the desired metric $g_i$ on $M$. As $i\to\infty$, $(M,g_i)$ converges to $X=\susp(\mathbb{R}P^3)$, a suspension over $\mathbb{R}P^3$. $X$ has two singular points as the vertices. For small $\epsilon,\delta>0$, $\intr$ is homeomorphic to $(0,1)\times \mathbb{R}P^3$. Hence
	$$\pi_1(M)=\pi_1(X)=\mathrm{id},\quad \pi_1(\intr)=\mathbb{Z}_2.$$
	The backward homomorphism $\psi_i:\pi_1(\intr)\to \pi_1(M)$ has kernel $\mathbb{Z}_2$. 
\end{exmp}

\section{Construction of $\psi_i$}\label{sec:construction}

In this section, we always assume that $M_i$ is a sequence of closed $n$-manifolds with (\ref{eq:cond}) that Gromov-Hausdorff converges to $X$. Let $0<\epsilon<\epsilon_0(n)/2$, where $\epsilon_0(n)$ is the constant in Theorem \ref{pre:CC_reg_contract}. Let $x$ be a regular point of $X$ and $x_i$ in $M_i$ converging to $x$. By the proof of Lemma \ref{lem:reg_param}(1), there is $\delta>0$ such that $B_{\delta}(x)\subseteq \mathcal{R}_{\epsilon,\delta}$, thus $x\in \intr$. We may further shrink this $\delta$ later. The main goal of this section is to construct the group homomorphisms
$$\psi^\delta_i: \pi_1(\intr,x)\to \pi_1(M_i,x_i)$$
for all $i$ large. 

\begin{lem}\label{lem:local_contract}
	Given any $0<\epsilon<\epsilon_0(n)/2$ and $\delta>0$, the following holds for all large $i$.
	
	Let $z_i$ be a point in $M_i$ that is $\delta/30$-close to a point $z\in \intr$. Then any loop in $B_{\delta/30}(z_i)$ is contractible in $B_{\delta}(z_i)$. 
\end{lem}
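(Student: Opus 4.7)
I plan to reduce the statement to an application of the Cheeger--Colding ball contractibility theorem (Theorem~\ref{pre:CC_reg_contract}) at the point $z_i$ in the manifold $M_i$ itself, with parameter $\delta/3$. Each $M_i$ lies in $\mathcal{M}(n,-1,v')$ for some $v'=v'(n,D,v)>0$, because Bishop--Gromov together with $\mathrm{vol}(M_i)\ge v$ and $\mathrm{diam}(M_i)\le D$ yields a uniform lower bound on $\mathrm{vol}(B_1(p))$ for any $p\in M_i$, so Theorem~\ref{pre:CC_reg_contract} is available. Once I verify
\[
d_{GH}(B_{\delta/3}(z_i),B_{\delta/3}^n(0)) \le \epsilon_0(n)\cdot\delta/3
\]
for all $i$ large, the theorem gives contractibility of $B_r(z_i)$ inside $B_{2r}(z_i)$ for every $0<r\le\delta/30$; picking $r=\delta/30$ shows any loop in $B_{\delta/30}(z_i)$ contracts in $B_{\delta/15}(z_i)\subseteq B_{\delta}(z_i)$, which is exactly the conclusion.

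To verify the displayed estimate, I argue by contradiction. If it fails along some subsequence, extract a further subsequence along which $z_i\to y$ for some $y\in X$ with $d(y,z)\le\delta/30<\delta/3$. Since $z\in\intr\subseteq\mathcal{R}_{\epsilon,\delta}$, the argument in the proof of Lemma~\ref{lem:reg_param}(1) --- chaining Theorem~\ref{pre:CC_max_vol}(1), Bishop--Gromov, and Theorem~\ref{pre:CC_max_vol}(2) --- shows that every point of $B_{\delta/3}(z)$, and in particular $y$, is $(\Psi(\epsilon|n),\delta/3)$-regular, so $\Hmeas(B_{\delta/3}(y))\ge(1-\Psi(\epsilon|n))\mathrm{vol}(B_{\delta/3}^n(0))$. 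Volume convergence (Theorem~\ref{pre:CC_vol_conv}) applied to $z_i\to y$ then transports this estimate to the manifold level:
\[
\mathrm{vol}(B_{\delta/3}(z_i))\ge (1-\Psi(\epsilon|n)-o(1))\mathrm{vol}(B_{\delta/3}^n(0))
\]
for $i$ large. Feeding this back into Theorem~\ref{pre:CC_max_vol}(2), rescaled so that the ball has unit radius (which only perturbs the Ricci lower bound by a factor $\delta^2$), yields $d_{GH}(B_{\delta/3}(z_i),B_{\delta/3}^n(0))\le\Psi'(\epsilon|n)\cdot\delta/3$, contradicting the assumption.

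The main obstacle is the final quantitative threshold: closing the contradiction requires $\Psi'(\epsilon|n)\le\epsilon_0(n)$, and the standing bound $\epsilon<\epsilon_0(n)/2$ does not literally guarantee this. The remedy is to further shrink $\epsilon$ at the outset of the section so that both successive passes through $\Psi(\cdot|n)$ remain below $\epsilon_0(n)$; since $\epsilon$ is a free parameter in Theorem~\ref{thm:backward_surj}, this extra restriction is harmlessly absorbed into the dimensional threshold $\epsilon(n)$ appearing there.
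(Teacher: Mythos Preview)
Your argument is sound in outline but takes a detour that the paper avoids. The paper's proof never touches volume: by Gromov--Hausdorff convergence and compactness of $X$, for all large $i$ and all $z\in X$ one has $d_{GH}(B_\delta(w_i),B_\delta(z))\le(\epsilon_0(n)/2)\,\delta$ whenever $w_i\in M_i$ is $\eta_i$-close to $z$; combining this with the defining inequality $d_{GH}(B_\delta(z),B_\delta^n(0))\le\epsilon\,\delta$ for $z\in\intr$ via the triangle inequality gives $d_{GH}(B_\delta(w_i),B_\delta^n(0))<\epsilon_0(n)\,\delta$ directly, and Theorem~\ref{pre:CC_reg_contract} applied at scale $\delta$ (to $w_i$, then a simple ball inclusion to pass to the nearby $z_i$) finishes. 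This is exactly why the threshold $\epsilon<\epsilon_0(n)/2$ appears in the hypothesis: it is tailored to this triangle-inequality step, and the lemma is proved for every such $\epsilon$ and every $\delta>0$.

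Your route through Theorems~\ref{pre:CC_vol_conv} and~\ref{pre:CC_max_vol} loses constants through $\Psi(\cdot\,|\,n)$ twice, which is why you cannot close the argument at the stated threshold and must shrink $\epsilon$ further. There is a second, unacknowledged loss: applying Theorem~\ref{pre:CC_max_vol}(2) after rescaling the $\delta/3$-ball to unit size turns the curvature bound into $-(n-1)(\delta/3)^2$, so that step also needs $\delta$ small enough to be in the almost-rigidity regime. Both restrictions are harmless for the downstream application to Theorem~\ref{thm:backward_surj}, but they mean you have not established the lemma in the generality stated. One minor sloppiness: in your contradiction setup the point $z\in\intr$ may vary with $i$, so before writing $d(y,z)\le\delta/30$ you should first extract a limit of the $z$'s in $\overline{\intr}\subseteq\mathcal{R}_{\epsilon,\delta}$.
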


\begin{proof}
	We set
	$$\eta_i=d_{GH}(M_i,X)\to 0.$$
	Then for each $z\in X$, we can choose a point $w_i\in M_i$ that is $\eta_i$-close to $z$. By the convergence $M_i\ghto X$ and the compactness of $X$, there is $i_0$ large such that
	$$d_{GH}(B_\delta(w_i),B_\delta(z))\le \dfrac{\epsilon_0(n)}{2} \delta$$
	holds for all $z\in X$, all $w_i\in M_i$ that is $\eta_i$-close to $z$, and all $i\ge i_0$, where $\epsilon_0(n)$ is the constant in Theorem \ref{pre:CC_reg_contract}.
	
	Now fixing a point $z\in\intr$, we have
	$$d_{GH}(B_\delta(z),B_{\delta}^n(0))\le\epsilon \delta.$$
	Thus by triangle inequality,
	$$d_{GH}(B_\delta(w_i),B_{\delta}^n(0))\le (\epsilon+\epsilon_0(n)/2)\delta<\epsilon_0(n)\delta.$$
	Then by Theorem \ref{pre:CC_reg_contract}, every loop in $B_{\delta/10}(w_i)$ is contractible in $B_{\delta/5}(w_i)$. Let $z_i$ be any point in $M_i$ that is $\delta/30$-close to $z$. We have
	$$d(z_i,w_i)\le d(z_i,z)+d(z,w_i)\le \delta/30+\eta_i.$$
	Thus when $i$ is large with $\eta_i<\delta/30$, we see that $B_{\delta/30}(z_i)\subseteq B_{\delta/10}(w_i)$. Therefore, every loop in $B_{\delta/30}(z_i)$ is contractible in $B_{\delta/5}(w_i)\subseteq B_{\delta}(z_i)$.
\end{proof}

With Lemma \ref{lem:local_contract}, we follow a similar construction in \cite[Lemma 2.4]{PW} (also see \cite{Tu}) to construct nearby loops and homotopies on $M_i$ from the ones on $\intr$. For two compact length metric spaces $(X_1,x_1)$ and $(X_2,x_2)$ that are close in the Gromov-Hausdorff distance, we say that two curves $\sigma_j:[0,1] \to X_j$, where $j=1,2$, are $\epsilon$-close, if 
$$d(\sigma_1(t),\sigma_2(t))\le \epsilon$$
for all $t\in [0,1]$.

\begin{lem}\label{lem:nearby_homotopy}
	We write $\eta_i=d_{GH}(M_i,X)\to 0$. Then for sufficiently large $i$, the followings hold.\\
	(1) For any loop $\sigma:[0,1]\to \intr$, there is a loop $\sigma_i$ in $M_i$ that is $5\eta_i$-close to $\sigma$.\\
	(2) Let $\sigma_i$ and $\sigma'_i$ be loops in $M_i$ that are both $\delta/300$-close to a loop $\sigma$ in $\intr$, then $\sigma_i$ and $\sigma'_i$ are free homotopic in $M_i$.\\
	(3) Let $\sigma$ and $\tau$ be two loops in $\intr$. Let $\sigma_i$ and $\tau_i$ be loops in $M_i$ that is $\delta/300$-close to $\sigma$ and $\tau$, respectively. If $\sigma$ and $\tau$ are free homotopic in $\intr$, then $\sigma_i$ and $\tau_i$ are free homotopic in $M_i$.
\end{lem}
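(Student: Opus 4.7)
The plan is to follow the Borsuk--Tuschman strategy already used in Pan--Wei \cite{PW}, with Lemma \ref{lem:local_contract} providing the contractibility of small loops in $M_i$. All three parts amount to pulling back a subdivision from the source ($[0,1]$ or $[0,1]^2$), picking nearby points in $M_i$ at the vertices, joining them by short paths, and then filling the resulting small cells using Lemma \ref{lem:local_contract}.

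For (1), I would use uniform continuity of $\sigma$ to take a partition $0=t_0<t_1<\ldots<t_N=1$ with $\mathrm{diam}(\sigma([t_k,t_{k+1}]))\le\eta_i$, pick $p_k^i\in M_i$ with $d(p_k^i,\sigma(t_k))\le\eta_i$ via an $\eta_i$-GH approximation, and note $d(p_k^i,p_{k+1}^i)\le 3\eta_i$. Connect consecutive $p_k^i$ by paths in $M_i$ of length at most $4\eta_i$ (which exist since $\mathrm{diam}(M_i)\le D$ and, more importantly, since $M_i$ is a length space). Triangle inequality then gives a piecewise loop $\sigma_i$ with $d(\sigma_i(t),\sigma(t))\le 5\eta_i$ on each subinterval.

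For (2), I would choose the partition finely enough that $\sigma$, and hence (by the $\delta/300$-closeness hypothesis) also $\sigma_i$ and $\sigma'_i$, have diameter at most $\delta/300$ on each $[t_k,t_{k+1}]$. At each vertex, $\sigma_i(t_k)$ and $\sigma'_i(t_k)$ are within $2\delta/300$ of each other, so I join them by a short path $\alpha_k$ in $M_i$ of length at most $\delta/100$. The quadrilateral
$$\sigma_i|_{[t_k,t_{k+1}]} \,\cdot\, \alpha_{k+1} \,\cdot\, (\sigma'_i|_{[t_k,t_{k+1}]})^{-1} \,\cdot\, \alpha_k^{-1}$$
is then a loop contained in $B_{\delta/30}(\sigma_i(t_k))$, while $\sigma_i(t_k)$ is $\delta/30$-close to $\sigma(t_k)\in\intr$. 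Lemma \ref{lem:local_contract} fills this loop by a null-homotopy in $B_\delta(\sigma_i(t_k))\subseteq M_i$, and concatenating these disks across $k$ assembles a free homotopy between $\sigma_i$ and $\sigma'_i$. For (3), I would apply the same scheme to a free homotopy $H:[0,1]^2\to\intr$, using a fine grid, lifting its vertices to nearby points in $M_i$, joining adjacent vertices by short paths, and filling each small grid square by Lemma \ref{lem:local_contract}. This produces a free homotopy in $M_i$ between some approximations $\tilde\sigma_i$ to $\sigma$ and $\tilde\tau_i$ to $\tau$; part (2) then connects $\tilde\sigma_i$ to $\sigma_i$ and $\tilde\tau_i$ to $\tau_i$ via free homotopies, completing the argument.

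The main obstacle is simply quantitative bookkeeping: every small cell loop must lie in a ball of radius $\delta/30$ whose center in $M_i$ is $\delta/30$-close to a point of $\intr$, so that Lemma \ref{lem:local_contract} is applicable. This is handled by using compactness of $\sigma([0,1])$, $\tau([0,1])$, and $H([0,1]^2)$ in $\intr$ together with uniform continuity to choose the partition/grid sufficiently fine (relative to $\delta$ and $\eta_i$), and then tracking constants through the triangle inequality so that each edge length and each vertex deviation stays within the $\delta/300$-budget built into the hypotheses.
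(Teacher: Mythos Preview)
Your proposal is correct and follows essentially the same Borsuk--Tuschman strategy as the paper: partition via uniform continuity, lift vertices, join by short (minimal) geodesics, and fill small cells using Lemma \ref{lem:local_contract}. The only cosmetic differences are that the paper uses a triangular decomposition of $S^1\times[0,1]$ in (3) and places the boundary vertices directly on the given $\sigma_i,\tau_i$ (avoiding your extra appeal to part (2)), and that the paper uses minimal geodesics rather than generic short paths; neither changes the substance of the argument.
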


\begin{proof}
	(1) The construction of $\sigma_i$ is the same as the proof of \cite[Lemma 2.4(1)]{PW}. Namely, using the uniform continuity of $\sigma$, we choose a suitable partition of $[0,1]$. Then for each intermediate point in the partition, we can pick nearby points in $M_i$ and then join them by minimal geodesics.
	 
	(2) By uniform continuity of $\sigma$, we choose $l>0$ such that
	$$d(\sigma(t),\sigma(t'))\le \delta/300$$
	for all $t,t'\in[0,1]$ with $|t-t'|\le l$. Let $\{t_0=0,t_1,...,t_j,...,t_N=1\}$ be a partition of $[0,1]$ with $|t_{j+1}+t_j|\le l$ for all $j$. By triangle inequality, it is clear that
	$$d(\sigma_i(t_j),\sigma_i(t_{j+1}))\le 3\cdot \delta/300,\quad d(\sigma'_i(t_j),\sigma'_i(t_{j+1}))\le 3\cdot \delta/300.$$
	Let $c_{i,j}$ be the loop obtained by joining $\sigma_i|_{[t_j,t_{j+1}]}$, a minimal geodesic from $\sigma_{i}(t_{j+1})$ to $\sigma'_i(t_{j+1})$, the inverse of $\sigma'_i|_{[t_j,t_{j+1}]}$, and lastly a minimal geodesic from $\sigma'_{i}(t_j)$ to $\sigma_i(t_j)$. Since 
	$$d(\sigma_i(t_j),\sigma'_i(t_j))\le 2\cdot \delta/300$$
	for all $i$. By construction, one can verify that 
	$$\text{image of }c_{i,j}\subseteq B_{\delta/30}(\sigma_i(t_j)).$$
	Because $\sigma_i(t_j)$ is $\delta/300$-close to $\sigma(t_j)\in \intr$, by Lemma \ref{lem:local_contract}, $c_{i,j}$ is contractible in $M_i$ for all $j$. Thus $\sigma_i$ and $\sigma'_i$ are free homotopic.
	
	(3) Let $H:S^1\times [0,1]\to \intr$ be a homotopy between $\sigma$ and $\tau$. We follow the method in \cite[Lemma 2.4]{PW} to construct a homotopy $H_i$ between $\sigma_i$ and $\tau_i$ as below. By the uniform continuity of $H$, we can choose a finite triangular decomposition $\Sigma$ of $S^1\times [0,1]$ so that $$\mathrm{diam}(H(\Delta))\le \delta/300$$
	for each triangle $\Delta$ of $\Sigma$. For any vertex $v$ of $\Sigma$, if $v$ is on the boundary of $S^1\times [0,1]$, then $H_i(v)$ is naturally defined as a point on $\sigma_i$ or $\tau_i$; if not, then we define $H_i(v)$ as a point in $M_i$ that is $\eta_i$-close to $H(v)$. Next, we define $H_i$ on every edge of $\Sigma$: for an edge that is on the boundary of $S^1\times [0,1]$, $H_i$ on this edge is naturally defined as part of $\sigma_i$ or $\tau_i$; for an edge not on the boundary with vertices $v$ and $w$, we map it to a minimal geodesic between $H_i(v)$ and $H_i(w)$. If $\eta_i\le \delta/300$, then by construction, every triangle $\Delta$ satisfies
	$$H_i(\partial \Delta)\subseteq B_{\delta/30}(H_i(v)),$$
	where $v$ is a vertex of $\Delta$. Since $H_i(v)$ is $\delta/300$-close to $H(v)\in \intr$, we can apply Lemma \ref{lem:local_contract} to contract the loop $H_i(\Delta)$. Applying this to all the triangles of $\Sigma$, we result in the desired homotopy between $\sigma_i$ and $\tau_i$.
\end{proof}

Now we construct the backward homomorphism $\psi_i^\delta$.

\begin{defn}\label{defn:backward}
Let $[\sigma]\in \pi_1(\intr,x)$ represented by a loop $\sigma$ based at $x$ in $\intr$. For $i$ large that fulfills Lemma \ref{lem:nearby_homotopy}, we draw a loop $\sigma_i$ in $M_i$ based at $x_i$ that is $\delta/300$-close to $\sigma$. We define
\begin{align*}
\psi^\delta_i:\pi_1(\intr,x)&\to \pi_1(M_i,x_i),\\
 [\sigma]&\mapsto [\sigma_i].
\end{align*}
\end{defn}

\begin{thm}\label{thm:well_defined}
	The above constructed $\psi_i^\delta$ is well-defined and is a group homomorphism for all $i$ large.
\end{thm}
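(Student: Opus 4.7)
The plan is to reduce the entire statement to Lemma~\ref{lem:nearby_homotopy}, with one subtlety: that lemma produces free homotopies while membership in $\pi_1(M_i,x_i)$ needs based homotopies. My main task is to track basepoints carefully through the constructions of that lemma; the homomorphism property then comes for free by concatenation.

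For existence of a based nearby loop, I would apply Lemma~\ref{lem:nearby_homotopy}(1) for $i$ so large that $5\eta_i < \delta/300$. The construction (partitioning $[0,1]$ and joining nearby points by minimal geodesics) has freedom in the choice at each partition point, so I simply set the endpoints to $x_i$ itself, producing a loop based at $x_i$.

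For well-definedness, I split into two checks. First, if $\sigma_i,\sigma_i'$ are both $\delta/300$-close to $\sigma$ and based at $x_i$, I apply Lemma~\ref{lem:nearby_homotopy}(2). Inspecting that proof, the connecting minimal geodesics at the partition endpoints $t_0=0$ and $t_N=1$ degenerate to the point $x_i$, since both loops already agree there. Consequently the homotopy swept out by contracting the loops $c_{i,j}$ fixes $x_i$ throughout, and $[\sigma_i]=[\sigma_i']$ in $\pi_1(M_i,x_i)$. Second, if $\sigma$ and $\tau$ are based-homotopic in $\intr$ via a homotopy $H$, I would follow Lemma~\ref{lem:nearby_homotopy}(3), triangulating $S^1 \times [0,1]$ so that the basepoint locus $\{*\}\times[0,1]$ is a subcomplex and setting $H_i\equiv x_i$ on that subcomplex. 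The contractions of the triangle boundaries supplied by Lemma~\ref{lem:local_contract} give admissible fillings that extend this boundary data, producing a based homotopy between $\sigma_i$ and $\tau_i$. Finally, for $[\sigma],[\tau]\in\pi_1(\intr,x)$, the concatenation $\sigma_i\cdot\tau_i$ is a based loop $\delta/300$-close to $\sigma\cdot\tau$, which immediately gives $\psi_i^\delta([\sigma][\tau])=[\sigma_i][\tau_i]$.

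The main obstacle is really just the bookkeeping needed to upgrade free to based homotopy; there is no new geometric content beyond what Lemma~\ref{lem:local_contract} already provides. The partitions and triangulations in the proofs of Lemma~\ref{lem:nearby_homotopy} have enough flexibility to respect the basepoint locus as a subcomplex, which is all that is required to turn the free homotopies provided there into based ones here.
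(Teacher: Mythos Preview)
Your proposal is correct and follows essentially the same approach as the paper: both reduce well-definedness to Lemma~\ref{lem:nearby_homotopy}(2) and (3) and verify the homomorphism property via concatenation. Your treatment is in fact more careful than the paper's, which simply invokes the free homotopies of Lemma~\ref{lem:nearby_homotopy} without explicitly upgrading them to based ones; your observation that the partitions and triangulations can be chosen to respect the basepoint locus fills exactly that gap.
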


\begin{proof}
	By Lemma \ref{lem:nearby_homotopy}(2), $\psi_i^\delta[\sigma]=[\sigma_i]$ is independent of the choice of $\sigma_i$. It also follows from Lemma \ref{lem:nearby_homotopy}(3) that the definition is independent of the choice of $\sigma$. 
	
	It is straightforward to check that $\psi_i^\delta$ is a group homomorphism. In fact, let $\sigma$ and $\tau$ be two loops in $\intr$ based at $x$, and let $\sigma_i$ and $\tau_i$ be loops in $M_i$ that is $\delta/300$-close to $\sigma$ and $\tau$, respectively. Since the the product $\sigma_i\cdot\tau_i$ is  clearly $\delta/300$-close to $\sigma\cdot\tau$, by definition, we have
	$$\psi_i^\delta[\sigma]\cdot \psi_i^\delta[\tau]=[\sigma_i]\cdot [\tau_i]=[\sigma_i\cdot\tau_i]=\psi_i^\delta[\sigma\cdot \tau]=\psi_i^\delta([\sigma]\cdot[\tau]).$$
\end{proof}

For $0<\epsilon\le \epsilon'$ and $0<\delta'\le \delta$, we have inclusion 
$$\intr \subseteq \intrp.$$
For both $\intr$ and $\intrp$, we have backward homomorphisms defined; they are indeed related by the inclusion map, as stated in Lemma \ref{lem:comp_incl} below. Due to the dependence on $\epsilon$, we will write $\psi^{\epsilon,\delta}_i$ instead of $\psi_i^\delta$ for clarity.

\begin{lem}\label{lem:reg_inclusion}
	Let $0<\epsilon\le \epsilon'<\epsilon_0(n)/2$ and $0<\delta'\le \delta$. Suppose that $i$ is large such that both homomorphisms
	$$\psi_i^{\epsilon,\delta}: \pi_1(\intr,x)\to \pi_1(M_i,x_i),\quad \psi_i^{\epsilon',\delta'}: \pi_1(\intrp,x)\to \pi_1(M_i,x_i)$$
	are defined. Then $\psi_i^{\epsilon,\delta}$ coincides with the composition
	$$\pi_1(\intr,x)\overset{\iota_\star}\longrightarrow\pi_1(\intrp,x)\overset{\psi_i^{\epsilon',\delta'}}\longrightarrow \pi_1(M_i,x_i),$$
	where $\iota$ is the inclusion map $\intr\hookrightarrow\intrp$.
\end{lem}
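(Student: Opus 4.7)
The plan is to unwind both sides against Definition \ref{defn:backward} and exploit the flexibility in choosing the approximating loop that was established in Theorem \ref{thm:well_defined}. Fix a class $[\sigma]\in\pi_1(\intr,x)$ represented by a loop $\sigma$ based at $x$ in $\intr$. Under the inclusion $\iota\colon\intr\hookrightarrow\intrp$, the image $\iota_\star[\sigma]$ is represented by the same loop $\sigma$, now viewed as a loop in the larger set $\intrp$; the base point $x$ is unchanged.

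Apply the definition of $\psi_i^{\epsilon',\delta'}$ to $\iota_\star[\sigma]$: pick any loop $\sigma'_i$ in $M_i$ based at $x_i$ that is $\delta'/300$-close to $\sigma$, and set $\psi_i^{\epsilon',\delta'}(\iota_\star[\sigma])=[\sigma'_i]$. Since $\delta'\le\delta$, we have $\delta'/300\le\delta/300$, so the loop $\sigma'_i$ is automatically $\delta/300$-close to $\sigma$ as well. Thus $\sigma'_i$ is also an admissible choice of approximating loop in the definition of $\psi_i^{\epsilon,\delta}[\sigma]$, and taking this choice yields $\psi_i^{\epsilon,\delta}[\sigma]=[\sigma'_i]$.

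Combining the two, we obtain
\[
\psi_i^{\epsilon,\delta}[\sigma]=[\sigma'_i]=\psi_i^{\epsilon',\delta'}(\iota_\star[\sigma]),
\]
which is the desired equality of homomorphisms. The independence of $\psi_i^{\epsilon,\delta}[\sigma]$ from the choice of approximation, established via Lemma \ref{lem:nearby_homotopy}(2) in the proof of Theorem \ref{thm:well_defined}, is precisely what allows us to freely use the loop $\sigma'_i$ produced for the $(\epsilon',\delta')$-regular set as an approximating loop for the $(\epsilon,\delta)$-construction.

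There is no real obstacle here: once one observes that a stronger closeness condition implies the weaker one, the argument is a direct unwinding of definitions. The only care needed is to take $i$ large enough so that both $\psi_i^{\epsilon,\delta}$ and $\psi_i^{\epsilon',\delta'}$ are simultaneously defined (so that Lemma \ref{lem:nearby_homotopy} applies at scale $\delta/300$, which is the weaker requirement), which is guaranteed by the hypothesis of the lemma.
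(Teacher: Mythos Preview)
Your proof is correct and follows essentially the same approach as the paper: pick a $\delta'/300$-close approximating loop for $\psi_i^{\epsilon',\delta'}(\iota_\star[\sigma])$, observe that $\delta'\le\delta$ makes it also $\delta/300$-close, and conclude by the well-definedness established in Theorem~\ref{thm:well_defined}. The only difference is cosmetic (you write $\sigma'_i$ where the paper writes $\sigma_i$).
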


\begin{proof}
	Let $[\sigma]\in \pi_1(\intr,x)$, where $\sigma$ is a loop in $\intr$ based at $x$. Then $\iota\circ \sigma$ naturally represents an element of $\pi_1(\intrp,x)$. Let $\sigma_i$ be a loop in $M_i$ based at $x_i$ that is $\delta'/300$-close to $\iota\circ \sigma$. According to Definition \ref{defn:backward}, we have
	$$\psi_i^{\epsilon',\delta'} \circ \iota_\star [\sigma]=\psi_i^{\epsilon',\delta'}[\iota\circ \sigma]=[\sigma_i].$$
	Since $\delta'\le \delta$, the loop $\sigma_i$ is also $\delta/300$-close to $\iota\circ\sigma=\sigma$ in $\intr$. Therefore,
	$$\psi_i^{\epsilon,\delta}[\sigma]=[\sigma_i]=\psi_i^{\epsilon',\delta'} \circ \iota_\star [\sigma].$$
\end{proof}

\section{Surjectivity of $\psi_i$}\label{sec:surj}

The main goal of this section is to prove Theorem \ref{thm:backward_surj}. The proof of surjectivity of $\psi_i^\delta$ is a contradicting argument and we shall apply equivariant GH convergence to the contradicting sequence.

Before starting the proof of Theorem \ref{thm:backward_surj}, we prove some results about the equivariant GH convergence.

\begin{lem}\label{lem:length_bound}
	Let us consider the diagram (\ref{CD:cover}) with conditions (\ref{eq:local_cond}). Suppose that $x\in \intr$, where $0<\epsilon\le \epsilon(n)$ and $0<\delta\le \delta(n)$ are sufficiently small. Then there is a constant $l(n,\delta)>0$ such that any nontrivial element in $\pi_1(M_i,x_i)$ has length at least $l(n,\delta)$, where $i$ is large.
\end{lem}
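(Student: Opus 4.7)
My plan is a proof by contradiction via a blow-up at $\tilde{x}_i$, reducing the problem to a rigidity statement about isometric actions on Euclidean space. Suppose the conclusion fails: along a subsequence there exist nontrivial $\gamma_i \in \Gamma_i := \pi_1(M_i, x_i)$ with $l_i := d(\tilde{x}_i, \gamma_i \tilde{x}_i) \to 0$. Since $\mathrm{vol}(M_i) \ge v$ and $\mathrm{diam}(M_i) \le D$, a fundamental domain of volume at least $v$ lies inside $B_D(\tilde{x}_i)$, so Bishop--Gromov yields a uniform constant $v' = v'(n, D, v) > 0$ with $\mathrm{vol}(B_1(\tilde{x}_i)) \ge v'$; in particular each $\widetilde{M}_i \in \mathcal{M}(n, -1, v')$.

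The key step is to rescale the metrics by $l_i^{-1}$ and pass to an equivariant pointed GH limit at $\tilde{x}_i$. The rescaled Ricci bound $-l_i^2(n-1)$ tends to $0$, and since $x \in \intr$ is regular with tangent cone $\mathbb{R}^n$, Theorems \ref{pre:CC_vol_conv} and \ref{pre:CC_max_vol} ensure that every rescaled ball has volume converging to its Euclidean counterpart, both on the cover and on the quotient. After a diagonal extraction in which $l_i$ tends to $0$ slowly relative to $d_{GH}(M_i, X)$, one thus obtains
\[
(\widetilde{M}_i, l_i^{-1} d_i, \tilde{x}_i) \longrightarrow (\mathbb{R}^n, 0), \qquad (M_i, l_i^{-1} d_i, x_i) \longrightarrow (\mathbb{R}^n, 0),
\]
together with a closed limit subgroup $G^* \le \mathrm{Isom}(\mathbb{R}^n)$ of the rescaled $\Gamma_i$-actions. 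By Corollary \ref{cor:discrete_limit} applied in the rescaled noncollapsing regime, $G^*$ is discrete, while the rescaled displacement $d(\tilde{x}_i, \gamma_i \tilde{x}_i)/l_i$ equals $1$, so a further subsequence produces some $g \in G^*$ with $|g(0)| = 1$; in particular $g \ne e$ and $G^*$ is nontrivial.

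The contradiction then emerges from the equivariant quotient structure. By the standard compatibility of equivariant GH convergence with quotients, the rescaled limit of $M_i = \widetilde{M}_i / \Gamma_i$ is $(\mathbb{R}^n / G^*, [0])$; but it was identified above with $(\mathbb{R}^n, 0)$, so $\mathbb{R}^n / G^*$ is pointed-isometric to $\mathbb{R}^n$. Since $\mathbb{R}^n$ is simply connected, any discrete isometric action on $\mathbb{R}^n$ whose quotient is isometric to $\mathbb{R}^n$ must be trivial, forcing $G^* = \{e\}$ and contradicting $g \ne e$.

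The main obstacle I anticipate is making the diagonal rescaled GH convergences and the quotient compatibility fully rigorous. The interior-regularity hypothesis $x \in \intr$ is crucial here: it supplies almost-Euclidean volume estimates on a whole neighbourhood of $x$ that remain stable on all rescaled balls up to radius $\delta / l_i \to \infty$, which is exactly what is needed for the tangent-cone limits on bounded regions to be taken consistently. One must also verify that $G^*$ inherits discreteness through the rescaling, which relies on the uniform noncollapsing $\mathrm{vol}^{\mathrm{resc}}(B_1(\tilde{x}_i)) \to \mathrm{vol}(B_1^n(0))$ in the rescaled geometry.
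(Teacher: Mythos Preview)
Your blow-up strategy is a genuinely different route from the paper's, which gives a direct volume-counting argument in the spirit of Anderson: if $g_i$ has displacement $l_i$ at $\tilde{x}_i$, then the Dirichlet domain $F_i$ and its $g_i$-translate are disjoint, each meets $B_\delta(\tilde x_i)$ in a set of volume $\mathrm{vol}(B_\delta(x_i))$, and both lie in $B_{l_i+\delta}(\tilde x_i)$. Combining $2\,\mathrm{vol}(B_\delta(x_i))\le v(n,-1,l_i+\delta)$ with the almost-Euclidean volume of $B_\delta(x_i)$ (coming from $x\in\intr$ and volume convergence) forces $v(n,-1,l_i+\delta)/v(n,-1,\delta)>1.5$ and hence a uniform lower bound on $l_i$. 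No limiting process is taken.

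Your argument, as written, has two gaps. First, invoking Corollary~\ref{cor:discrete_limit} is circular: in the paper that corollary is \emph{deduced from} Lemma~\ref{lem:length_bound}. Second, the claim that both rescaled sequences converge to $(\mathbb{R}^n,0)$ is not justified. From $x\in\intr$ and Bishop--Gromov you only obtain $\mathrm{vol}(B_r(x_i))\ge (1-\Psi(\epsilon,\delta|n))\,\mathrm{vol}(B_r^n(0))$ for all $0<r\le\delta$ once $i$ is large, so the rescaled limits $\widetilde{Z}$ and $Z=\widetilde{Z}/G^*$ have almost-maximal volume ratio at every scale about the basepoint, but need not be $\mathbb{R}^n$ exactly; your ``diagonal extraction in which $l_i\to 0$ slowly relative to $d_{GH}(M_i,X)$'' cannot be arranged, since the $l_i$ are dictated by the contradiction hypothesis rather than chosen. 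Without the limits being literally Euclidean, the rigidity step $\mathbb{R}^n/G^*\cong\mathbb{R}^n\Rightarrow G^*=\{e\}$ does not apply. The approach can be repaired --- e.g.\ by running the Dirichlet-domain packing inequality inside the limit $\widetilde{Z}$ (now with $\mathrm{Ric}\ge 0$) against the element $g$ of displacement $1$, giving $2(1-\Psi)\,\mathrm{vol}(B_R^n(0))\le \mathcal{H}^n(B_{R+1}(\tilde z))\le \mathrm{vol}(B_{R+1}^n(0))$ for all $R$ --- but that is precisely the paper's argument transplanted to the blow-up, so the rescaling becomes a detour rather than a shortcut.
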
	

\begin{proof}
	The proof is a localized version of an argument by Anderson \cite{An}.
	
	Let $g_i\in \pi_1(M_i,x_i)$ with $d(g_i\tilde{x}_i,\tilde{x}_i)=l_i>0$. We shall prove a lower bound for $\liminf l_i:=l$. Let $F_i$ be the Dirichlet domain of $\widetilde{M}_i$ centered at $\tilde{x}_i$. Since 
	$$g_i(F_i\cap B_{\delta}(\tilde{x}_i))\subseteq B_{l_i+\delta}(\tilde{x}_i), \quad g_i(F_i\cap B_{\delta}(\tilde{x}_i)) \cap (F_i\cap B_{\delta}(\tilde{x}_i))=\emptyset,$$
	we have volume estimate
	\begin{align*}
		2 \mathrm{vol}(B_{\delta}(x_i))&= \mathrm{vol}(F_i\cap B_{\delta}(\tilde{x}_i))+\mathrm{vol}(g_i(F_i\cap B_{\delta}(\tilde{x}_i)))\\
		&\le \mathrm{vol}(B_{l_i+\delta}(\tilde{x}_i))\\
		&\le v(n,-1,l_i+\delta),
	\end{align*}
	where $v(n,-\kappa,r)$ means the volume of an $r$-ball in the $n$-dimensional space form of constant curvature $-\kappa$. By volume convergence, as $i\to \infty$, we have
	\begin{align*}
	\mathrm{vol}(B_{\delta}(x_i))&\to \mathcal{H}^n(B_{\delta}(x))\\
	&\ge (1-\Psi(\epsilon|n))\cdot v(n,0,\delta)\\
	&\ge (1-\Psi(\epsilon|n))\cdot (1-\Psi(\delta|n)) \cdot v(n,-1,\delta).
	\end{align*}
	These lead to
	$$\dfrac{v(n,-1,l_i+\delta)}{v(n,-1,\delta)} \ge 1.9(1-\Psi(\epsilon,\delta|n))>1.5.$$
	for all $i$ large, which gives a universal lower bound $l(n,\delta)$ for $\liminf l_i$.
\end{proof}

\begin{cor}\label{cor:discrete_limit}
	In the diagram (\ref{CD:cover}) with conditions (\ref{eq:local_cond}), the limit group $\Gamma$ is discrete.
\end{cor}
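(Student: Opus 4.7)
The plan is to deduce the corollary directly from Lemma \ref{lem:length_bound} combined with the existence of regular points in noncollapsing Ricci limits and the definition of equivariant Gromov-Hausdorff convergence.

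First, since the convergence (\ref{CD:cover}) is noncollapsing on the base, the Cheeger-Colding regularity theory guarantees that the regular set $\mathcal{R}\subseteq X$ is nonempty (in fact dense). I would pick any $x'\in\mathcal{R}$ together with parameters $0<\epsilon\le\epsilon(n)$, $0<\delta\le\delta(n)$ satisfying the hypotheses of Lemma \ref{lem:length_bound}, so that $x'\in\intr$. Choose $x'_i\in M_i$ with $x'_i\to x'$ and lift to $\tilde{x}'_i\in\widetilde{M}_i$; after passing to a subsequence in the equivariant convergence if necessary, we may assume $\tilde{x}'_i\to y'$ for some $y'\in Y$. Changing the reference point from $\tilde{x}_i$ to $\tilde{x}'_i$ (and from $y$ to $y'$) affects neither the group $\Gamma_i=\pi_1(M_i,x_i)$ nor its action on $\widetilde{M}_i$, and in particular does not affect whether the limit $\Gamma$ is discrete.

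Next, I would invoke Lemma \ref{lem:length_bound} at the regular base point $x'$: for all sufficiently large $i$ and every nontrivial $g_i\in\Gamma_i$,
$$d(g_i\tilde{x}'_i,\tilde{x}'_i)\ge l(n,\delta)>0.$$
By the definition of equivariant Gromov-Hausdorff convergence, every $g\in\Gamma$ arises as a limit of some sequence $g_i\in\Gamma_i$ in the sense that $g_i\tilde{x}'_i\to gy'$. If $g\ne e_\Gamma$, so that $d(gy',y')>0$, then necessarily $g_i\ne e$ for all $i$ large, the uniform lower bound applies, and passing to the limit yields
$$d(gy',y')\ge l(n,\delta).$$
Hence the identity is isolated in $\Gamma$, and $\Gamma$ is a discrete subgroup of $\mathrm{Isom}(Y)$.

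There is no substantial obstacle once Lemma \ref{lem:length_bound} is in hand; the only point requiring a moment's thought is the base-point change, since the lemma assumes that the reference point is regular while the corollary is stated for an arbitrary $x$. The analytically nontrivial input -- Anderson's volume-comparison displacement bound, localized to a small ball about a regular point -- has already been carried out in the preceding lemma.
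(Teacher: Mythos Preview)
Your approach is essentially identical to the paper's: pick a regular point in $X$, lift to the universal covers, apply Lemma~\ref{lem:length_bound} to get the uniform displacement bound $l(n,\delta)$ on the orbits $\Gamma_i\cdot\tilde{x}'_i$, and pass to the limit. One small remark: the clause ``so that $d(gy',y')>0$'' is unjustified a priori and in any case unnecessary---deduce $g_i\ne e$ for large $i$ directly from $g\ne e$ (if infinitely many $g_i=e$ then $g=\lim g_i=e$), not via the displacement at $y'$, which is exactly the quantity you are trying to bound from below.
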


\begin{proof}
	Let $z\in X$ be a regular point. We choose small $0<\epsilon<\epsilon(n)$ and $\delta>0$ such that $z\in\intr$. Let $z_i\in M_i$ converging to $z$ and let $\tilde{z}_i\in \widetilde{M}_i$ be a lift of $z_i$. By Lemma \ref{lem:length_bound}, the orbit $\Gamma_i \cdot \tilde{z}_i$ is $l(n,\delta)$-discrete. Passing this to the limit, we see that $\Gamma$ is a discrete group. 
\end{proof}

\begin{lem}\label{lem:order}
	Let $(N_i,x_i)\in\mathcal{M}(n,-1,v)$ with an isometric $\Gamma_i$-action on each $N_i$. Suppose that the sequence converges
	$$(N_i,x_i,\Gamma_i)\ghto (Y,y,G)$$
	and the limit group $G$ is discrete.
	Let $g\in G$ be an element of finite order $k$ and let $\gamma_i\in \Gamma_i$ converging to $g$. Then\\
	(1) $\gamma_i$ has order $k$ for all $i$ large;\\
	(2) $\langle\gamma_i\rangle \ghto \langle g \rangle$, where $\langle\cdot\rangle$ means the subgroup generated by that element.
\end{lem}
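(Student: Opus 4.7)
The plan is to deduce (1) in two steps: first that $\gamma_i^k = e$ for all large $i$, then that no smaller power of $\gamma_i$ is already trivial. Part (2) will follow immediately from (1). The first ingredient is continuity of composition under equivariant Gromov--Hausdorff convergence: whenever $\gamma_i \ghto g$ with displacement bounded on fixed balls, one has $\gamma_i^j \ghto g^j$ for every fixed $j \ge 1$. Setting $j = k$ yields $\gamma_i^k \ghto g^k = e$, so $d(\gamma_i^k z_i, z_i) \to 0$ uniformly as $z_i$ ranges over any fixed ball $B_R(x_i)$.

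The key step is to upgrade this to the literal equality $\gamma_i^k = e$. I would argue by contradiction: if $\gamma_i^k \ne e$ along a subsequence, then $\langle \gamma_i^k \rangle$ is a nontrivial subgroup of $\mathrm{Isom}(N_i)$, so Theorem \ref{pre:nss} gives $D_{1,x_i}(\langle \gamma_i^k \rangle) \ge \delta(n,v)$. Choosing $z_i \in B_1(x_i)$ and $m_i \ge 1$ nearly realizing this subgroup displacement, a careful iteration argument (using that the displacement of $\gamma_i^k$ on any fixed ball tends to $0$) keeps $\gamma_i^{k m_i}$ of bounded displacement at $x_i$, so extracting an equivariant GH subsequential limit gives $h \in G$ with $d(hz,z) \ge \delta(n,v)/2 > 0$; in particular $h \ne e$. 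The contradiction then comes from discreteness of $G$ together with $\gamma_i \ghto g$ of finite order $k$: every bounded-displacement subsequential limit of a sequence of powers $\gamma_i^{j_i}$ is forced to lie in the finite cyclic $\langle g \rangle$ (matching via the explicit convergences $\gamma_i^j \ghto g^j$ for each residue class modulo $k$), so $h$ must equal some $g^d$ with $1 \le d < k$; reconciling this identification with $h = \lim (\gamma_i^k)^{m_i}$, whose base $\gamma_i^k$ has vanishing displacement, forces $h = e$, contradicting $h \ne e$.

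Once $\gamma_i^k = e$ is established, the order of $\gamma_i$ divides $k$. If $\gamma_i^m = e$ with $1 \le m < k$ along a subsequence, pigeonhole yields a further subsequence on which $m$ is constant, and passing $\gamma_i^m \ghto g^m$ to the limit gives $g^m = e$, contradicting the order of $g$. Hence $\gamma_i$ has order exactly $k$. For (2), the individual convergences $\gamma_i^j \ghto g^j$ for $0 \le j \le k-1$, together with $|\langle \gamma_i \rangle| = k = |\langle g \rangle|$ from part (1), give exactly the equivariant GH convergence of finite sets of isometries $\langle \gamma_i \rangle \ghto \langle g \rangle$.

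The main obstacle is the central upgrade from $\gamma_i^k \ghto e$ (as isometries) to $\gamma_i^k = e$ (literally as group elements). Without an a priori individual-element displacement lower bound in $\Gamma_i$, the argument must leverage the subgroup-level bound of Theorem \ref{pre:nss} and the discreteness of $G$; the delicate point is controlling high powers of $\gamma_i^k$, whose displacements can be bounded away from zero even while $\gamma_i^k$ itself has vanishing displacement on every fixed ball.
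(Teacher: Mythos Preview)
Your overall strategy matches the paper's: show $\gamma_i^k\ghto e$, upgrade to $\gamma_i^k=e$ using Theorem~\ref{pre:nss} together with the discreteness of $G$, then rule out smaller order by passing $\gamma_i^m\ghto g^m$ to the limit. The part where your proposal diverges from the paper, and where there is a genuine gap, is the contradiction you draw after extracting the limit $h=\lim(\gamma_i^k)^{m_i}\in G$ with $h\ne e$. You assert that ``every bounded-displacement subsequential limit of a sequence of powers $\gamma_i^{j_i}$ is forced to lie in the finite cyclic $\langle g\rangle$,'' justified by ``matching via the explicit convergences $\gamma_i^j\ghto g^j$ for each residue class modulo $k$.'' But the residue-class matching only collapses $\gamma_i^{j_i}$ to one of $\gamma_i^0,\dots,\gamma_i^{k-1}$ \emph{once you already know} $\gamma_i^k=e$, which is exactly what you are trying to prove; without that, $j_i=km_i$ may tend to infinity and there is no reason the limit lies in $\langle g\rangle$. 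The subsequent sentence ``reconciling this identification with $h=\lim(\gamma_i^k)^{m_i}$ \dots forces $h=e$'' then has no content.

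The paper closes this gap with a different (and short) idea, and in fact reverses your order of applying the two inputs. It first shows directly that the subsequential limit $H$ of $\langle\gamma_i^k\rangle$ must be $\{e\}$: if some $h\in H$ had $d(hz,z)>0$, then since the single generator $\gamma_i^k$ has displacement tending to $0$ on any fixed ball, one can, for each $l\in(0,d(hz,z))$, choose powers $(\gamma_i^k)^{m_i}$ whose displacement at $z_i$ tends to $l$ (a discrete intermediate-value step along $m=0,1,\dots$), producing elements of $H\le G$ with displacement exactly $l$ at $z$ for every such $l$, contradicting discreteness of $G$. Only \emph{after} establishing $\langle\gamma_i^k\rangle\ghto\{e\}$, hence $D_{1,x_i}(\langle\gamma_i^k\rangle)\to 0$, does the paper invoke Theorem~\ref{pre:nss} to conclude $\gamma_i^k=e$. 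Your proposal has all the right ingredients but is missing this intermediate-displacement trick; without it, the contradiction you sketch does not go through.
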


\begin{proof}
	(1) First note that $\gamma_i^k\ghto g^k=e$ as $i\to\infty$. We claim that $\langle\gamma_i^k\rangle \ghto \{e\}$. In fact, let $H$ be the limit of $\langle\gamma_i^k\rangle$ and suppose that $H$ has a non-identity element $h$. We pick a point $z\in Y$ with $d(hz,z)>0$. Since $d(\gamma_i^k z_i,z_i)\to 0$, where $z_i\in M_i$ converging to $z$, for any $0<l<d(hz,z)$, we can find a sequence $m_i$ such that 
	$$d((\gamma_i^k)^{m_i}z_i,z_i)\to l.$$ 
	The sequence $(\gamma_i^k)^{m_i}$ would converge to an element of $H$ with displacement $l$ at $z$. Because $l\in (0,d(hz,z))$ is arbitrary, we result in a contradiction to the discreteness of $G$. This proves the claim.
	
	By this claim, we have $D_{1,x_i}(\langle\gamma_i^k\rangle)\to 0$. On the other hand, by Theorem \ref{pre:nss} $$D_{1,x_i}(\langle\gamma_i^k\rangle)\ge\delta(n,v)>0$$
	if $\langle\gamma_i^k\rangle$ is nontrivial. We conclude that $\gamma_i^{k}=e$. It is clear that $\gamma_i$ cannot have order $m$ strictly less than $k$; otherwise $\gamma_i^m\ghto e\not= g^m$. We complete the proof that $\gamma_i$ has order $k$.
	
	(2) is a direct consequence of (1). 
\end{proof}

Let $\gamma$ be an isometry of $Y$. We write
$$\mathrm{Fix}(\gamma)=\{ z\in Y | \gamma z=z \}$$
as the fixed point set of $\gamma$.

\begin{prop}\label{thm:fix_codim_2}
	In the convergence (\ref{CD:cover}) with conditions (\ref{eq:cond}), $\mathrm{Fix}(\gamma)$ has Hausdorff dimension at most $n-2$ for all non-identity $\gamma\in \Gamma$.
\end{prop}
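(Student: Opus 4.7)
The plan is to show that every point of $\mathrm{Fix}(\gamma)$ lies in the singular set of the limit space $Y$, after which the conclusion follows at once from the Cheeger--Colding Hausdorff dimension bound (Theorem \ref{pre:CC_sing_dim}), which gives $\dimH \mathcal{S}(Y) \le n-2$ and hence $\dimH \mathrm{Fix}(\gamma) \le n-2$.

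First I would verify that $(Y,y) \in \mathcal{M}(n,-1,v')$ for some $v'>0$. Since each covering map $\pi_i : \widetilde{M}_i \to M_i$ is a local isometry, and since $\mathrm{diam}(M_i) \le D$ together with $\mathrm{vol}(M_i) \ge v$ yields $\mathrm{vol}(B_1(x_i)) \ge c(n,D,v) > 0$ by Bishop--Gromov, we obtain $\mathrm{vol}(B_1(\tilde{x}_i)) \ge c(n,D,v)$ uniformly in $i$. Passing to the limit places $Y$ in the regime where both the Cheeger--Colding singular dimension bound Theorem \ref{pre:CC_sing_dim} and the Chen--Rong--Xu freeness statement Theorem \ref{pre:no_fix_sing} can be invoked.

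Next I would show $\mathrm{Fix}(\gamma) \subseteq \mathcal{S}(Y)$ by contradiction. Suppose some regular point $z \in \mathrm{Fix}(\gamma)$ exists, with $\gamma \neq e$. Take $\epsilon = \epsilon(n,v')$ as in Theorem \ref{pre:no_fix_sing}. By the definition of the regular set, $z \in \bigcup_{\delta>0} \mathcal{R}_{\epsilon,\delta}$, so there exists $\delta_0 > 0$ with $z \in \mathcal{R}_{\epsilon,\delta_0}$. Theorem \ref{pre:no_fix_sing} then forces $\Gamma$ to act freely on $B_{\delta_0/4}(z)$, which contradicts $\gamma z = z$ with $\gamma$ nontrivial. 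Consequently $\mathrm{Fix}(\gamma) \subseteq \mathcal{S}(Y)$, completing the proof.

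The argument is essentially bookkeeping on top of the quoted results, so no serious obstacle is expected. The only care needed is in extracting the uniform noncollapsing of $\widetilde{M}_i$ from the assumptions on $M_i$, and in observing that the regularity scale $\epsilon$ from Theorem \ref{pre:no_fix_sing} depends only on $n$ and $v'$, so that a single choice of $\epsilon$ works at every candidate fixed point $z$.
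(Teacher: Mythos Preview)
Your proposal is correct and coincides with the paper's second proof (Proof~II), which likewise shows $\mathrm{Fix}(\gamma)\subseteq\mathcal{S}(Y)$ via Theorem~\ref{pre:no_fix_sing} and then invokes Theorem~\ref{pre:CC_sing_dim}. The paper also records an alternative, more hands-on Proof~I via density points and tangent cones, but your argument matches the shorter route the author gives.
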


We provide two proofs of Proposition \ref{thm:fix_codim_2}. As we shall soon see from the second proof, which relies on Theorem \ref{pre:no_fix_sing}, a stronger result $\mathrm{Fix}(\gamma)\subseteq \mathcal{S}$ holds.

\begin{proof}[Proof I of Proposition \ref{thm:fix_codim_2}]
	Suppose the contrary $\dimH(\mathrm{Fix}(\gamma))> n-2$. Then $\mathcal{H}^{l}(\mathrm{Fix}(\gamma))>0$ for some real number $n-2<l<n$. Let $\mathcal{S}$ be the singular set of $Y$. By Theorem \ref{pre:CC_sing_dim}, $\mathcal{C}:=\mathrm{Fix}(\gamma)-\mathcal{S}$ also satisfies $\mathcal{H}^{l}(\mathcal{C})>0$. Let $z$ be an $l$-density point of $\mathcal{C}$, that is, $z\in \mathcal{R}\cap \mathrm{Fix}(\gamma)$ such that
	$$\limsup_{r\to 0}\dfrac{\mathcal{H}^l_\infty(\mathcal{C}\cap B_r(y))}{\omega_lr^l}\ge 2^{-l}.$$
	Let $r_j\to\infty$ be a sequence that realizes the above limsup and let 
	$$(r_j Y,z)\ghto (C_zY=\mathbb{R}^n,v)$$
	be a corresponding tangent cone at $z$. With respect to this convergent sequence, $\gamma$ subconverges to a limit isometry $g$ of $\mathbb{R}^n$, and $\mathcal{C}$ subconverges to a closed subset $\mathcal{C}_z \subseteq \mathbb{R}^n$. It is clear that by construction, $g$ fixes every point in $\mathcal{C}_z$. By a standard covering argument, $\mathcal{C}_y\cap B_1(v)$, and thus $\mathrm{Fix}(g)\cap B_1(v)$, have positive $l$-dimensional Hausdorff measure. 
	
	Since $g$, which an isometry of $\mathbb{R}^n$, satisfies $\dimH(\mathrm{Fix}(g))>n-2$, we conclude that $g$ must be a reflection of $\mathbb{R}^n$ that fixes a hyperplane. In particular, $g$ has order $2$. By Lemma \ref{lem:order}(1), $\gamma$ has order $2$ as well. Let $\gamma_i\in \Gamma_i$ that converges to $\gamma$ and let $\overline{M_i}=\widetilde{M_i}/\langle\gamma_i\rangle$. Lemma \ref{lem:order} allows us to consider the convergence
	\begin{equation}\label{CD:cover_tangent}
		\begin{CD}
			(\widetilde{M}_i,{z}_i,\langle\gamma_i\rangle) @>GH>> 
			(Y,z,\langle\gamma\rangle)\\
			@VV\pi_i V @VV\pi V\\
			(\overline{M_i},\bar{z}_i) @>GH>> (\overline{Y}=Y/\langle\gamma\rangle,\bar{z}),
		\end{CD}\quad \begin{CD}
		(r_jY,z,\langle\gamma\rangle) @>GH>> 
		(\mathbb{R}^n,v,\langle g\rangle)\\
		@VV\pi_i V @VV\pi V\\
		(r_j\overline{Y},\bar{z}) @>GH>> (\mathbb{R}^n/\langle g \rangle,\bar{v}).
		\end{CD}
	\end{equation}
    Because $g$ is a reflection in $\mathbb{R}^n$, the quotient $\mathbb{R}^n/\langle g \rangle$ is isometric to the Euclidean halfspace $\mathbb{H}^n=\{(a_1,...,a_n)|a_n\ge 0\}$. In particular, $\mathbb{H}^n$ appears as a tangent cone of a non-collapsing Ricci limit space $\overline{Y}$ at $\bar{y}$. This is a contradiction to Theorem \ref{pre:CC_sing_dim} and thus completes the proof.
\end{proof}

\begin{proof}[Proof II of Theorem \ref{thm:fix_codim_2}]
	The second proof is much shorter thanks to a result by Chen-Rong-Xu \cite{CRX}, that is, Theorem \ref{pre:no_fix_sing} which we have recalled in Section \ref{sec:pre}. We shall show that $\mathrm{Fix}(\gamma) \subseteq \mathcal{S}$; then the Hausdorff dimension estimate follows from Theorem \ref{pre:CC_sing_dim}. 
	
	In fact, let $z\in Y$ be a regular point and let $0<\epsilon<\epsilon(n,v)$, the constant in Theorem \ref{pre:no_fix_sing}. Then there is some $\delta>0$ such that $z$ is $(\epsilon,\delta)$-regular. Applying Theorem \ref{pre:no_fix_sing} to the first diagram of (\ref{CD:cover_tangent}), we conclude that $\langle \gamma \rangle$-action, and thus $\gamma$, does not fix $z$.
\end{proof}
	
We are in a position to prove Theorem \ref{thm:backward_surj}. For reader's convenience, we restate the surjectivity part in Theorem \ref{thm:backward_surj} as below.
	
\begin{thm}\label{thm:surj}
	Let $\psi^\delta_i: \pi_1(\intr,x)\to \pi_1(M_i,x_i)$ be the group homomorphism constructed in Definition \ref{defn:backward}. When $\delta$ is sufficiently small, $\psi^\delta_i$ is surjective for all $i$ large.
\end{thm}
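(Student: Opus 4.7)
The strategy is a contradiction argument that uses equivariant Gromov-Hausdorff convergence on the universal covers. Suppose the conclusion fails; then one can extract a subsequence together with $\delta_i \to 0$ and elements $[\alpha_i]\in\pi_1(M_i,x_i)$ not in the image of $\psi_i^{\delta_i}$. Because $\psi_i^{\delta_i}$ is a homomorphism with image a subgroup, and because $\pi_1(M_i,x_i)$ is generated by loops of length at most $2D$ (Gromov's short generating set via Dirichlet domains), I may replace $[\alpha_i]$ with a short generator also outside the image, so that $d(\alpha_i\tilde{x}_i,\tilde{x}_i)\le 2D$. Passing to a further subsequence, apply equivariant GH convergence (\ref{CD:cover}); by Corollary \ref{cor:discrete_limit} the limit $\Gamma$ is discrete, and the uniformly bounded $\alpha_i$ converge to some $g\in\Gamma$. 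By Lemma \ref{lem:length_bound} the case $g=e$ is impossible, and by Theorem \ref{pre:no_fix_sing} $\Gamma$ acts freely at the regular point $\tilde{x}$, so $g\tilde{x}\neq\tilde{x}$.

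The key geometric step is to build a loop $c_g$ in $X$ based at $x$ lying in the regular set $\mathcal{R}_X$ whose lift in $Y$ starting at $\tilde{x}$ ends at $g\tilde{x}$. By the strong form of Proposition \ref{thm:fix_codim_2}, $\mathrm{Fix}(\gamma)\subseteq \mathcal{S}_Y$ for every $\gamma\neq e$, and $\mathcal{S}_Y$ has Hausdorff dimension at most $n-2$ by Theorem \ref{pre:CC_sing_dim}. Cheeger-Colding's near-length path-connectedness (Theorem \ref{pre:CC_cnt}), applied to suitable closed inner approximations of $\mathcal{S}_Y$, connects the regular points $\tilde{x}$ and $g\tilde{x}$ by a path $\tilde{c}_g\subseteq \mathcal{R}_Y$. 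Its projection $c_g=\pi\circ\tilde{c}_g$ lies in $\mathcal{R}_X$, because points of $\mathcal{R}_Y$ have trivial stabilizer so $\pi$ behaves like a free cover there.

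Being a compact subset of $\mathcal{R}$, Lemma \ref{lem:reg_param} provides some $\tau>0$ with $c_g\subseteq \mathcal{R}^\circ_{\epsilon,\tau}$; hence for all $i$ large enough that $\delta_i<\tau$, one has $c_g\subseteq \mathcal{R}^\circ_{\epsilon,\delta_i}$ (since smaller $\delta$ enlarges $\mathcal{R}_{\epsilon,\delta}$), and $[c_g]\in\pi_1(\mathcal{R}^\circ_{\epsilon,\delta_i},x)$ is defined. It remains to verify $\psi_i^{\delta_i}[c_g]=[\alpha_i]$. Draw a loop $c'_{g,i}$ in $M_i$ based at $x_i$ that is $\delta_i/300$-close to $c_g$ via Lemma \ref{lem:nearby_homotopy}(1), so by definition $\psi_i^{\delta_i}[c_g]=[c'_{g,i}]$. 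Lifting $c'_{g,i}$ to $\widetilde{M}_i$ starting at $\tilde{x}_i$, the endpoint converges (through equivariant GH approximations) to the endpoint $g\tilde{x}$ of the lift of $c_g$; since $\alpha_i\tilde{x}_i\to g\tilde{x}$ and the orbit $\Gamma_i\tilde{x}_i$ is uniformly discrete by Lemma \ref{lem:length_bound}, the endpoint must coincide with $\alpha_i\tilde{x}_i$ for large $i$. This forces $[c'_{g,i}]=[\alpha_i]$, contradicting the choice of $[\alpha_i]$.

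The main obstacle will be constructing the path $\tilde{c}_g$ entirely inside the (not necessarily closed) regular set $\mathcal{R}_Y$, since Theorem \ref{pre:CC_cnt} is stated for closed bad sets; this requires either the Cheeger-Colding stratification of $\mathcal{S}_Y$ into closed strata or a suitable inner approximation argument to legitimately invoke near-length connectedness. A secondary bookkeeping issue is matching the lifted endpoint of the nearby loop with $\alpha_i\tilde{x}_i$: this depends crucially on the uniform discreteness from Lemma \ref{lem:length_bound} together with the freeness supplied by Proposition \ref{thm:fix_codim_2}.
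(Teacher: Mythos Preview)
Your proposal is correct and follows essentially the same route as the paper: contradict with $\delta_i\to 0$, reduce to short generators of length $\le 2D$, pass to the equivariant limit $(Y,y,\Gamma,g)$, connect $y$ to $gy$ by a path $\tilde c_g$ in the regular part of $Y$ (avoiding all $\mathrm{Fix}(\gamma)$), project to a loop $c_g$ in $\intr\subset X$ for some fixed $\tau>0$, and conclude that $c_g$ hits the forbidden element under $\psi_i^{\delta_i}$ once $\delta_i<\tau$.

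Two small tactical differences are worth noting. First, your one-line justification that $c_g\subset\mathcal{R}_X$ (``trivial stabilizer so $\pi$ behaves like a free cover'') is thinner than what is actually needed: the paper chooses $\delta_3>0$ so that every ball $B_{\delta_3}(z)$ along $\tilde c_g$ is genuinely isometric to $B_{\delta_3}(\pi(z))$, using that the tubular neighborhood of $\tilde c_g$ stays a definite distance from every orbit translate; only then does $(\epsilon,\delta)$-regularity transfer from $Y$ to $X$. Second, for the final identification the paper goes in the opposite direction from you: instead of drawing $c'_{g,i}$ in $M_i$ close to $c_g$, lifting it to $\widetilde M_i$, and matching endpoints via discreteness, it first chooses paths $\sigma_{i(j)}$ in $\widetilde M_{i(j)}$ from $\tilde x_{i(j)}$ to $g_{i(j)}\tilde x_{i(j)}$ converging uniformly to $\tilde c_g$, and then projects to loops $\overline{\sigma_{i(j)}}\to c_g$ in $M_{i(j)}$ which represent $g_{i(j)}$ tautologically. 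This bypasses your endpoint-matching step entirely and is a bit cleaner, though your version also works given Lemma~\ref{lem:length_bound}.
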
	

\begin{proof}
	We argue by contradiction. Suppose that for each $1/j$, where $j\in\mathbb{N}$, we can find some $i(j)\ge i$ and some element $g_{i(j)}\in \pi_1(M_{i(j)},x_{i(j)})$ such that $g_{i(j)}$ is not in the image of $\psi_{i(j)}^{1/j}$. Since $\mathrm{diam}(M_i)\le D$, $\pi_1(M_i,x_i)$ can be generated by elements of length at most $2D$. Together with Lemma \ref{lem:length_bound}, without loss of generality, we will assume that each $g_{i(j)}$ has length between $l(n,\delta)$ and $2D$ at $x_{i(j)}$.
	
	For this sequence $i(j)$, after passing to a subsequence if necessary, we consider the equivariant Gromov-Hausdorff convergence:
	\begin{center}
		$\begin{CD}
			(\widetilde{M}_{i(j)},\tilde{x}_{i(j)},\Gamma_{i(j)},g_{i(j)}) @>GH>> 
			(Y,y,\Gamma,g)\\
			@VV\pi_i V @VV\pi V\\
			(M_{i(j)},x_{i(j)}) @>GH>> (X,x).
		\end{CD}$
	\end{center}
    Because $x$ is regular, so is $y$. Under the isometry $g$, $gy$ is regular as well with $d(gy,y)\in [l(n,\delta),2D]$. By proof II of Theorem \ref{thm:fix_codim_2}, the points $g$ and $gy$ are not fixed by any $\gamma\in \Gamma-\{e\}$. Let
    $$\mathcal{C}=\mathcal{S}(Y) \cup \left(  \bigcup_{\gamma\in \Gamma-\{e\}} \mathrm{Fix}(\gamma) \right).$$
    (In fact, one can set $\mathcal{C}=\mathcal{S}(Y)$ because the fixed point set is contained in the singular set, as we have seen in the second proof of Theorem \ref{thm:fix_codim_2}.) By Theorem \ref{pre:CC_sing_dim}, $\mathcal{C}$ has Hausdorff dimension at most $n-2$. We note that $y,gy\in Y-\mathcal{C}$ because they are in $\pi^{-1}(x)$ and thus not fixed by any $\gamma\in\Gamma-\{e\}$ according to Lemma \ref{lem:length_bound}. As a result of Theorem \ref{pre:CC_cnt}, we can connect $y$ and $gy$ by a path $\sigma$ that is contained in $Y-\mathcal{C}$. In particular, $\sigma$ is in the regular set and avoids any point that is fixed by some nontrivial element of $\Gamma$. 
     Let $\delta_1>0$ be the distance between $\sigma$ and $\cup_{\gamma\in \Gamma-\{e\}}\mathrm{Fix}(\gamma)$ and let 
     $$T:=T_{\delta_1/2}(\sigma)=\{z\in Y \ |\ d(z,\sigma)\le \delta_1/2 \}$$ 
     be the closed tubular neighborhood of $\sigma$ with radius $\delta_1/2$. By construction, $T$ does not intersect $\mathrm{Fix}(\gamma)$ for all non-identity $\gamma\in \Gamma$. Because $T$ is compact, 
     $$\delta_2:=\inf_{a\in T,\gamma\in\Gamma-\{e\}} d(a,\gamma a)$$
     is positive. 
     
     Setting $\delta_3=\min\{\delta_1/2,\delta_2/4\}$, we claim that $B_{\delta_3}(z)$ is isometric to $B_{\delta_3}(\pi(z))\subseteq X$ for all $z\in \sigma$, where $\pi: Y\to X=Y/\Gamma$ is the quotient map. In fact, first note that for any two points $a,b\in B_{\delta_3}(z)$, we clearly have $a,b\in T$. Then for any other orbit point $a'\in \Gamma a-\{a\}$, it follows from triangle inequality that 
     $$d(a',b)\ge d(a',a)-d(a,b)\ge \delta_2-2\delta_3\ge \delta_2/2 > d(a,b).$$
     This verifies the claim: for all $a,b\in B_{\delta_3}(z)$, $$d_Y(a,b)=d_Y(\Gamma a, \Gamma b)=d_X(\pi(a),\pi(b)).$$
     
     We choose a small $\epsilon_1>0$ such that $\Psi(\epsilon_1|n)\le \epsilon$, where $\Psi$ is the function in Lemma \ref{lem:reg_param}(1). With this $\epsilon_1$, by Lemma \ref{lem:reg_param}(2), there is $\delta_4>0$ such that 
     $$\sigma \subseteq \mathcal{R}_{\epsilon_1,\delta_4}(Y).$$
     Let $\delta_5:=\min\{\delta_3,\delta_4\}>0$. Since $B_{\delta_5}(z)$ is isometric to $B_{\delta_5}(\pi(z))$ for all $z\in\sigma$, together with Lemma \ref{lem:reg_param}(1), we conclude that
     $$\pi(\sigma) \subseteq \mathcal{R}_{\epsilon_1,\delta_5}(X) \subseteq \mathrm{Int}\mathcal{R}_{\epsilon,\delta_5/3}.$$
     
     Now we go back to the sequence of manifolds. Along $\widetilde{M}_{i(j)}$, let $\sigma_{i(j)}$ be a sequence of paths from $\tilde{x}_{i(j)}$ to $g_{i(j)}\tilde{x}_{i(j)}$ that converges uniformly to $\sigma$. Then its projection $\pi_{i(j)}(\sigma_{i(j)})=:\overline{\sigma_{i(j)}}$ is a loop that represents $g_{i(j)}$ and uniformly converges to a loop $\pi(\sigma)$ in $X$ as $j\to\infty$. By the construction in Definition \ref{defn:backward}, when $j$ is large we have
     $$\psi_{i(j)}^{\delta_5/3}: \pi_1(\mathrm{Int}\mathcal{R}_{\epsilon,\delta_5/3},x)\to \pi_1(M_i,x_i)\  \text{  with  }\  \psi_{i(j)}^{\delta_5/3}[\overline{\sigma_{i(j)}}]=g_{i(j)}.$$
     Applying Lemma \ref{lem:reg_inclusion} with $\epsilon=\epsilon'$, we obtain
     $$g_{i(j)}=\psi_{i(j)}^{\delta_5/3}[\overline{\sigma_{i(j)}}]= \psi_{i(j)}^{1/j} \circ \iota_\star [\overline{\sigma_{i(j)}}],$$
     where $\iota$ is the inclusion map $\mathrm{Int}\mathcal{R}_{\epsilon,\delta_5/3} \hookrightarrow \mathrm{Int}\mathcal{R}_{\epsilon,1/j}$.
     In particular, $g_{i(j)}$ is in the image of $\psi_{i(j)}^{1/j}$. This contradicts with our choice in the beginning that $g_{i(j)}$ is not in the image of $\psi_{i(j)}^{1/j}$ and thus completes the proof.
\end{proof}

With Theorems \ref{thm:well_defined} and \ref{thm:surj}, now we complete the proof of Theorem \ref{thm:backward_surj} by Lemma \ref{lem:comp_incl} below.

\begin{lem}\label{lem:comp_incl}
	Let
	$$\phi_i: \pi_1(M_i,x_i)\to \pi_1(X, x),\quad \psi^\delta_i: \pi_1(\intr,x)\to \pi_1(M_i,x_i)$$
	be the surjective homomorphisms in Theorems \ref{thm:forward_surj} and \ref{thm:surj}, respectively. Then
	$$\phi_i \circ \psi^\delta_i: \pi_1(\intr,x)\to \pi_1(X,x)$$
	coincides with $\iota_\star$ for all $i$ large, where $\iota:\intr \hookrightarrow X$ is the inclusion map.
\end{lem}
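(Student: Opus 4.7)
The plan is to exploit the fact that both $\phi_i$ and $\psi_i^\delta$ are constructed via the same ``nearby loop'' mechanism. The key observation is that in the construction of $\phi_i$ from \cite{PW, Tu}, the image $\phi_i[\sigma_i]$ of a loop $\sigma_i$ in $M_i$ is defined as the class of \emph{any} sufficiently close loop in $X$---the well-definedness of $\phi_i$ is precisely the statement that any two such close-enough nearby loops are homotopic in $X$.

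First I would take $[\sigma]\in\pi_1(\intr,x)$ and apply $\psi_i^\delta$ to obtain $\psi_i^\delta[\sigma]=[\sigma_i]$, where $\sigma_i$ is a loop in $M_i$ based at $x_i$ that is $\delta/300$-close to $\sigma$ by Definition \ref{defn:backward}. Then to compute $\phi_i[\sigma_i]$, rather than drawing a fresh nearby loop in $X$, I would use the loop $\iota\circ\sigma$ itself as the required nearby loop: it is $\delta/300$-close to $\sigma_i$ by construction and sits naturally in $X$ via the inclusion. Provided $\delta$ is shrunk small enough that $\delta/300$ falls within the closeness threshold needed for $\phi_i$'s well-definedness, this immediately yields $\phi_i[\sigma_i]=[\iota\circ\sigma]=\iota_\star[\sigma]$, and composing gives the claim.

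The main obstacle is essentially book-keeping: matching the closeness thresholds used in the two constructions. The $\delta/300$ gap coming from $\psi_i^\delta$ must be small enough to invoke the well-definedness of $\phi_i$, which ultimately rests on the semi-local simple connectedness of $X$ established in \cite{PW}. This threshold depends on the local contractibility behavior of $X$ near the image of $\sigma$, and can be arranged by further shrinking $\delta$ if necessary, using a local contractibility argument in $X$ analogous to Lemma \ref{lem:local_contract} (applied to loops in $X$ rather than in $M_i$). Once this is verified, no additional homotopy-construction argument is required—the lemma follows tautologically from how $\phi_i$ and $\psi_i^\delta$ were set up.
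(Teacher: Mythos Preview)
Your proposal is correct and follows essentially the same approach as the paper: both arguments use the original loop $\sigma$ (viewed in $X$ via $\iota$) as the ``nearby loop'' computing $\phi_i[\sigma_i]$, and both invoke the semi-local simple connectedness of $X$ from \cite{PW} to control the closeness threshold for $\phi_i$. The only cosmetic difference is in the bookkeeping: rather than shrinking $\delta$, the paper keeps $\delta$ fixed and instead chooses $\sigma_i$ to be $\delta_1$-close to $\sigma$ with $\delta_1=\min\{\delta_0/20,\delta/300\}$, which simultaneously satisfies the requirement for $\psi_i^\delta$ and lands inside the threshold for $\phi_i$; this avoids any need to alter the hypothesis on $\delta$.
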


\begin{proof}
	Because $X$ is semi-locally simply connected \cite{PW}, there is $\delta_0>0$ such that every loop contained in a $\delta_0$-ball of $X$ is contractible in $X$. We set 
	$$\delta_1=\min\{ \delta_0/20, \delta/300 \}.$$
	
	We recall that the forward homomorphism $\phi_i$ can be constructed as follows (see \cite{Tu} or \cite{PW} for details). When $i$ is large such that $d_{GH}(M_i,X)\le \delta_1$, for any loop $\sigma_i$ in $M_i$ based at $x_i$, we can draw a loop $\sigma$ in $X$ based at $x$ such that $\sigma$ is $5\delta_1$-close to $\sigma_i$. Then one can define the desired $\phi_i$ by sending $[\sigma_i]$ to $[\sigma]$. The choice of $\delta_0$ assures that $\phi_i$ is well-defined and a surjective homomorphism.
	
	Now let $[\sigma]\in \pi_1(\intr,x)$ represented by a loop $\sigma$ based at $x$ in $\intr$. When $i$ is large, let $\sigma_i$ be a loop based at $x_i\in M_i$ that is $\delta_1$-close to $\sigma$. By the constructions of $\phi_i$ and $\psi_i^\delta$, we have
	$$\phi_i \circ \psi^\delta_i[\sigma]=\phi_i[\sigma_i]=[\sigma]\in \pi_1(X,x).$$
\end{proof}
	 

Next, we prove Theorem \ref{thm:reg_inj_isom}.

\begin{proof}[Proof of Theorem \ref{thm:reg_inj_isom}]
	We choose a sufficiently small $\delta>0$ so that we can apply Theorem \ref{thm:backward_surj} to construct surjective group homomorphisms
	$$\psi_i^\delta:\pi_1(\intr,x)\to \pi_1(M_i,x_i)$$
	for all $i$ large. If the inclusion map $\iota^\delta: \intr \hookrightarrow X$ induces an injective homomorphism 
	$$\iota^\delta_\star : \pi_1(\intr,x)\to \pi_1(X,x),$$
	then by Theorem \ref{thm:backward_surj}(2), the composition
	$$\pi_1(\intr,x) \overset{\psi_i^\delta}\longrightarrow \pi_1(M_i,x_i) \overset{\phi_i}\longrightarrow \pi_1(X,x)$$
	is an isomorphism. Together with the surjectivity of $\psi_i^\delta$ and $\phi_i$, we clearly have isomorphism $\pi_1(X)\simeq \pi_1(M_i)$.
	
   In general, if $\iota^\delta_\star$ is not injective, we shall analyze its kernel. We claim that
   $$\ker \iota^\delta_\star = \ker \psi_i^\delta.$$
   If this claim holds, then 
   $$ \pi_1(M_i,x_i) = \dfrac{\pi_1(\intr,x)}{\ker \psi_i^\delta}=\dfrac{\pi_1(\intr,x)}{\ker \iota^\delta_\star}=\pi_1(X,x).$$
   One side of the inclusion $\ker \psi_i^\delta \subseteq \ker \iota^\delta_\star$ is clear due to Theorem \ref{thm:backward_surj}(2). It remains to prove the other direction.
   
   Let us consider a composition of inclusion maps $\iota \circ j= \iota^\delta$:
   $$\intr \overset{j} \hookrightarrow \intrr \overset{\iota}\hookrightarrow X.$$
   They induce
   $$\pi_1(\intr,x)\overset{j_\star} \longrightarrow \pi_1(\intrr,x) \overset{\iota_\star}\longrightarrow \pi_1(X,x)$$
   with $\iota_\star \circ j_\star = \iota^\delta_\star$ being surjective.
   By the assumption that $\iota_\star$ is injective, $\iota_\star$ is an isomorphism and
   $$\ker \iota^\delta_\star = \ker j_\star.$$
   Let $[\sigma]\in \ker j_\star$ represented by a loop $\sigma$ at $x$ in $\intr$. Then $\sigma$ is contractible in $\intrr$. Let $H:[0,1]^2\to \intrr$ be a nullhomotopy of $\sigma$. By Lemma \ref{lem:reg_param}(1,2), there are $\epsilon'=\Psi(\epsilon|n)>\epsilon$ and $0<\delta'<\delta$ such that 
   $$H([0,1]^2) \subseteq \intrp.$$
   When $i$ is large, we draw a loop $\sigma_i$ based at $x_i\in M_i$ that is $\delta'/300$-close to $\sigma$. It follows from Lemma \ref{lem:nearby_homotopy}(3) that $\sigma_i$ is contractible in $M_i$. By the construction of 
   $$\psi_i^{\epsilon',\delta'} : \pi_1(\intrp,x)\to \pi_1(M_i,x_i)$$
   and Lemma \ref{lem:comp_incl}, we have
   $$\psi_i^{\epsilon,\delta}[\sigma]=\psi_i^{\epsilon',\delta'}[\sigma]=[\sigma_i]=\mathrm{id} \in \pi_1(M_i,x_i).$$
   This shows that
   $$\ker \psi_i^{\epsilon,\delta} \supseteq \ker j_\star=\ker \iota^\delta_\star$$
   and hence completes the proof.
\end{proof}

\end{document}